\newtheorem{proposition}{Proposition}[section]
\newtheorem{theorem}[proposition]{Theorem}
\newtheorem{corollary}[proposition]{Corollary}
\newtheorem{lemma}[proposition]{Lemma}
\theoremstyle{definition}
\newtheorem{definition}[proposition]{Definition}
\newtheorem{question}[proposition]{Question}
\newtheorem{remark}[proposition]{Remark}
\newcommand{\set}[1]{\left\{#1\right\}}
\newcommand{\setcon}[2]{\left\{#1\ \left|\ #2\right.\right\}}
\newcommand{\C}{\mathbb{C}}
\newcommand{\R}{\mathbb{R}}
\newcommand{\Z}{\mathbb{Z}}
\newcommand{\N}{\mathbb{N}}
\newcommand{\PSL}{\mathrm{PSL}}
\newcommand{\Out}{\mathrm{Out}(\mathbb F)}
\newcommand{\fgen}[1]{\left\langle #1 \right\rangle}
\newcommand{\Mod}{\mathrm{Mod}(S)}
\newcommand{\teich}{\mathcal{T}(S)}
\author[M. Cordes]{Matthew Cordes}
\address{Department of Mathematics\\
Brandeis University\\
415 South Street\\
Waltham, MA 02453, U.S.A.}
\email{\href{mailto:mcordes@brandeis.edu}{mcordes@brandeis.edu}}
\author[M.G. Durham]{Matthew Gentry Durham}
\address{Department of Mathematics\\ 
University of Michigan\\ 
530 Church Street\\ 
Ann Arbor, MI 48105, U.S.A.}
\email{\href{mailto:durhamma@umich.edu}{durhamma@umich.edu}}
\date{\today}
\title{Boundary convex cocompactness and stability of subgroups of finitely generated groups}
\begin{document}

\setcounter{tocdepth}{1}
\maketitle

\begin{abstract}
A Kleinian group $\Gamma < \mathrm{Isom}(\mathbb H^3)$ is called convex cocompact if any orbit of $\Gamma$ in $\mathbb H^3$ is quasiconvex  or, equivalently, $\Gamma$ acts cocompactly on the convex hull of its limit set in $\partial \mathbb H^3$.

Subgroup stability is a strong quasiconvexity condition in finitely generated groups which is intrinsic to the geometry of the ambient group and generalizes  the classical quasiconvexity condition above.  Importantly, it coincides with quasiconvexity in hyperbolic groups and convex cocompactness in mapping class groups.

Using the Morse boundary, we develop an equivalent characterization of subgroup stability which generalizes the above boundary characterization from Kleinian groups. 
\end{abstract}


\section{Introduction}

There has been much recent interest in generalizing salient features of Gromov hyperbolic spaces to more general contexts, including their boundaries and convexity properties of nicely embedded subspaces.  Among these are the Morse property, its generalization to subspaces, stability, and the Morse boundary.  In this article, we use the Morse boundary to prove that stability for subgroups of finitely generated groups is naturally a convex cocompactness condition in the classical boundary sense.  We begin with some motivation from Kleinian groups and mapping class groups.

A nonelementary discrete (Kleinian) subgroup $\Gamma < \PSL_2(\C)$ determines a minimal $\Gamma$-invariant closed subspace $\Lambda(\Gamma)$ of the Riemann sphere called its \emph{limit set} and taking the convex hull of $\Lambda(\Gamma)$ determines a convex subspace of $\mathbb H^3$ with a $\Gamma$-action.  A Kleinian group $\Gamma$ is called \emph{convex cocompact} if it acts cocompactly on this convex hull or, equivalently, any $\Gamma$-orbit in $\mathbb H^3$ is quasiconvex.  Another equivalent characterization is that such a $\Gamma$ has a compact Kleinian manifold; see \cite{marden1974geometry, sullivan1985quasiconformal}.

Originally defined by Farb-Mosher \cite{FarbMosher} and later developed further by Kent-Leininger \cite{KentLein} and Hamenst\"adt \cite{HamCC}, a subgroup $H < \Mod$ is called convex cocompact if and only if any $H$-orbit in $\teich$, the Teichm\"uller space of $S$ with the Teichm\"uller metric, is quasiconvex, or $H$ acts cocompactly on the weak hull of its limit set $\Lambda(H) \subset \mathbb{P}\mathcal{ML}(S)$ in the Thurston compactification of $\teich$.  This notion is important because convex cocompact subgroups $H< \Mod$ are precisely those which determine Gromov hyperbolic surface group extensions.

In both of these examples, convex cocompactness is characterized equivalently by both a quasiconvexity condition and an asymptotic boundary condition.  In \cite{DT15}, Taylor and the second author introduced stability in order to characterize convex cocompactness in $\Mod$ by a quasiconvexity condition intrinsic to the geometry of $\Mod$.  In fact, stability naturally generalizes the above quasiconvexity characterizations of convex cocompactness to any finitely generated group.

In this article, we use the Morse boundary to define an asymptotic property for subgroups of finitely generated groups called \emph{boundary convex cocompactness} which generalizes the classical boundary characterization of convex cocompactness from Kleinian groups.  Our main theorem is:  

\begin{theorem}\label{thm:main}
Let $G$ be a finitely generated group.  Then $H<G$ is boundary convex cocompact if and only if $H$ is stable in $G$.
\end{theorem}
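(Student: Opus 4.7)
The plan is to take as the definition of boundary convex cocompactness the natural generalization of the Kleinian notion: associate to $H<G$ a limit set $\Lambda(H) \subset \partial_M G$ consisting of accumulation points in the Morse boundary of an $H$-orbit in the Cayley graph of $G$, form the weak hull $\mathrm{WH}(\Lambda(H))$ as the union of bi-infinite geodesics in $G$ with both endpoints in $\Lambda(H)$, and require that $H$ act properly and cocompactly on $\mathrm{WH}(\Lambda(H))$. The theorem then asserts that this boundary notion coincides with the intrinsic quasiconvexity notion of stability.

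For the forward direction (stability $\Rightarrow$ boundary convex cocompactness), I would use that a stable $H$ is quasi-isometrically embedded in $G$ and admits a uniform Morse gauge $N$ on geodesics between its elements. A diagonal/limiting argument on sequences $h_n \in H$ converging in the appropriate Morse boundary topology then places $\Lambda(H)$ inside the $N$-stratum $\partial_M^N G$. The key step is to show that any bi-infinite geodesic with both endpoints in $\Lambda(H)$ lies in a uniformly bounded neighborhood of $H$: such a geodesic arises as a subsequential limit of $N$-Morse geodesics between pairs $h_n, k_n \in H$, each of which stays close to $H$ by stability, and the Morse property is preserved under such limits. From this, $\mathrm{WH}(\Lambda(H))$ is coarsely equal to $H \cdot 1_G$, and the proper cocompact action of $H$ on itself transfers to $\mathrm{WH}(\Lambda(H))$.

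For the reverse direction (boundary convex cocompactness $\Rightarrow$ stability), the strategy is to run a Milnor--\v{S}varc type argument for the proper cocompact action of $H$ on $\mathrm{WH}(\Lambda(H))$, concluding that $H$ is quasi-isometric to the hull and in particular quasi-isometrically embedded in $G$. Cocompactness yields a uniform Morse gauge $N$ on the bi-infinite geodesics composing the hull, since up to $H$-translation there are only boundedly many configurations of basepoint and geodesic to consider. One then shows any $H$-Cayley geodesic fellow-travels a concatenation of such hull geodesics and thus is itself uniformly Morse in $G$, giving stability. A preliminary reduction handles the degenerate case $|\Lambda(H)|\leq 1$, where $H$ must be finite or virtually cyclic and stability is immediate from proper cocompactness.

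The main obstacle lies in making the limiting and uniformity arguments precise on both sides. In the forward direction, one must argue that $N$-Morseness is preserved under Hausdorff/subsequential limits of geodesics and that such limits always exist in the (non-locally-compact, non-metrizable in general) Morse boundary topology, which requires careful use of the Arzel\`a--Ascoli-style compactness available within a single Morse stratum. In the reverse direction, the subtlety is that the weak hull is not a priori a geodesic metric space, so the Milnor--\v{S}varc principle and the extraction of a uniform Morse gauge require one to work with a coarse version of geodesicity and to control Morse gauges across the entire hull from cocompactness. Verifying that the Morse gauges produced in each direction are genuinely uniform---not merely pointwise---is where the bulk of the technical work will concentrate.
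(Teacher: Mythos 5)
Your proposal proves the equivalence for a strictly weaker notion of boundary convex cocompactness than the one the theorem uses: you require only that $H$ act properly and cocompactly on the weak hull, whereas Definition \ref{defn:cc subspace} additionally requires that the limit set $\Lambda_e(H)$ be nonempty and \emph{compact} in the Morse boundary. This omission is fatal to the reverse direction, because the weakened statement is false. Take $G=\Z^2*\Z*\Z=\fgen{a,b}*\fgen{c}*\fgen{d}$ and $H=\fgen{a,b,c}$. Then $H$ is isometrically embedded and convex in $G$, so $\Lambda_e(H)\cong\partial_M H_e$ and $\mathfrak H_e(H)=H$; thus $H$ acts properly and cocompactly on its weak hull, yet $H\cong\Z^2*\Z$ is not hyperbolic and hence not stable. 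The step of your reverse direction that breaks is exactly the claim that cocompactness ``yields a uniform Morse gauge $N$ on the bi-infinite geodesics composing the hull, since up to $H$-translation there are only boundedly many configurations of basepoint and geodesic to consider.'' Cocompactness bounds the distance from hull points to the orbit, but infinitely many hull geodesics can pass through a single bounded region with Morse gauges that degenerate: in the example above, geodesics bi-asymptotic to $c^{-\infty}$ and to rays of the form $a^nb^nc^{\infty}$ have gauges blowing up with $n$, even though every individual endpoint is a Morse direction. The uniform gauge must instead come from compactness of the limit set, which forces all of $\Lambda_e(H)$ into a single stratum $\partial X^{(N)}_e$ (Lemma \ref{lem:compact uniform gauge}); only then does the limit-triangle argument (Proposition \ref{prop:compact hyp hull}) make the hull uniformly stable, after which the Milnor--\v{S}varc step you describe does finish the reverse direction.

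Your forward direction is essentially the one in the paper: stability gives a uniform gauge on geodesics between orbit points, limits of such geodesics remain uniformly Morse by Arzel\`a--Ascoli within a stratum, and the hull lies in a bounded neighborhood of the orbit, so $H$ acts properly and cocompactly on it. Note, however, that under the correct definition you must also verify compactness of $\Lambda_e(H)$; this follows because stability makes $H$ hyperbolic and the orbit map induces a homeomorphism $\partial_{Gr}H\to\Lambda_e(H)$, not merely the containment of $\Lambda_e(H)$ in a single stratum that you establish. As written, the proposal establishes an equivalence with a condition weaker than boundary convex cocompactness, and that equivalence does not hold.
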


Before moving on to the definitions, we discuss the situation in hyperbolic groups.

Let $H$ be a quasiconvex subgroup of a hyperbolic group $G$.   Then $H$ has a limit set $\Lambda(H) \subset \partial_{Gr} G$, the Gromov boundary of $G$, and one can define the weak hull of $\Lambda(H)$ to be the union of all geodesics in $G$ connecting distinct points in $\Lambda(H)$.  Swenson \cite{swenson2001quasi} proved that $H$ acts cocompactly on this weak hull if and only $H$ is quasiconvex in $G$.  Hence a quasiconvex subgroup of a hyperbolic group satisfies a boundary characterization of convex cocompactness intrinsic to the ambient geometry.

Stability generalizes quasiconvexity to any finitely generated group, and similarly the Morse boundary generalizes the Gromov boundary.  Thus Theorem \ref{thm:main} is a generalization of the hyperbolic case to any finitely generated group.

\subsection*{Stability and the Morse boundary}

We start with the definition of a Morse quasigeodesic:

\begin{definition}[Morse quasigeodesic]
A quasigeodesic $\gamma$ in geodesic metric space $X$ is called $N$-\emph{Morse} if there exists a function $N:\R_{\geq 1}\times \R_{\geq 0} \rightarrow \R_{\geq 0}$ such that if $q$ is any $(K,C)$-quasigeodesic with endpoints on $\gamma$, then $q \subset \mathcal N_{N(K,C)}(\gamma)$, the $N(K,C)$-neighborhood of $\gamma$.

We call $N$ the \emph{Morse gauge} of $\gamma$.
\end{definition}

Note that if $X$ is hyperbolic, then every geodesic in $X$ is Morse with a uniform gauge.

If $G$ is a finitely generated group, then we call $g \in G$ a \emph{Morse element} if its orbit in any Cayley graph of $G$ is a Morse quasigeodesic.  Some examples of Morse elements include rank-1 elements of CAT(0) groups \cite{BehrstockCharney}, pseudo-Anosov elements of mapping class groups \cite{Behrstock:asymptotic}, fully irreducible elements of the outer automorphism groups of free groups \cite{algom2011strongly}, and rank-one automorphisms of hierarchically hyperbolic spaces \cite{durham2016boundaries}.

We can now give a formal definition of stability:

\begin{definition}[Stability]
If $f:X \rightarrow Y$ is a quasiisometric embedding between geodesic metric spaces, we say $X$ is a \emph{stable} subspace of $Y$ if there exists a Morse gauge $N$ such that every pair of points in $X$ can be connected by an $N$-Morse quasigeodesic in $Y$; we call $f$ a \emph{stable embedding}.

If $H<G$ are finitely generated groups, we say $H$ is \emph{stable in} $G$ if the inclusion map $i:H \hookrightarrow G$ is a stable embedding.
\end{definition}

We note that stable subgroups are always hyperbolic and quasiconvex regardless of the chosen word metric on $G$---stability and quasiconvexity coincide in hyperbolic spaces---and stability is invariant under quasiisometric embeddings \cite{DT15}.

Introduced by the first author in \cite{Cordes15} for proper geodesic spaces and later refined and generalized to geodesic spaces in \cite{cordes2016stability}, the \emph{Morse boundary} of a geodesic metric space $X$, denoted $\partial_M X_e$, consists, roughly speaking, of asymptotic classes of sequences of points which can be connected to a fixed basepoint $e \in X$ by Morse geodesic rays; see Subsection \ref{subsec:morse boundary def} for the formal definition.  Importantly, it is a visual boundary, generalizes the Gromov boundary when $X$ is hyperbolic, and quasiisometries induce homeomorphisms at the level of Morse boundaries.

\subsection*{Boundary convex cocompactness}

Let $G$ be a finitely generated group acting by isometries on a proper geodesic metric space $X$.  Fix a basepoint $e \in X$.  One can define a limit set $\Lambda_e(G) \subset \partial_M X_e$ as the set of points which can be represented by sequences of $G$-orbit points;  note that $\Lambda_e(G)$ is obviously $G$-invariant.  One then defines the weak hull $\mathfrak H_e(G)$ of $\Lambda_e(G)$ in $X$ by taking all geodesics with distinct endpoints in $\Lambda_e(G)$.  See Section \ref{sec:G action} for the precise definitions.

\begin{definition}[Boundary convex cocompactness] \label{defn:cc subspace}
We say that $G$ acts \emph{boundary convex cocompactly} on $X$ if the following conditions hold:
\begin{enumerate}
\item $G$ acts properly on $X$;
\item For some (any) $e \in X$, $\Lambda_e(G)$ is nonempty and compact; 
\item For some (any) $e \in X$, the action of $G$ on $\mathfrak H_e(G)$ is cocompact. 
\end{enumerate} 
\end{definition}

\begin{definition}[Boundary convex cocompactness for subgroups] \label{defn:cc subgroup}
Let $G$ be a finitely generated group.  We say $H<G$ is \emph{boundary convex cocompact} if $H$ acts boundary convex cocompactly on any Cayley graph of $G$ with respect to a finite generating set. 
\end{definition}

Theorem \ref{thm:main} is an immediate consequence of the following stronger statement:

\begin{theorem} \label{thm:cc and stab}
Let $G$ be a finitely generated group acting by isometries on a proper geodesic metric space $X$.  Then the action of $G$ is boundary convex cocompact if and only if some (any) orbit of $G$ in $X$ is a stable embedding.

In either case, $G$ is hyperbolic and any orbit map $orb_e:G \rightarrow X$ extends continuously and $G$-equivariantly to an embedding of $\partial_{Gr} G$ which is a homeomorphism onto its image $\Lambda_e(G) \subset \partial_M X_e$.
\end{theorem}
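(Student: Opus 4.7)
The plan is to prove both implications, with the additional structural claims about $G$ and the boundary extension emerging once the stable-embedding side is available.

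For the forward direction, assume $G$ acts boundary convex cocompactly on $X$. The first and most delicate step is to extract a uniform Morse gauge from the compactness of $\Lambda_e(G) \subset \partial_M X_e$: the Morse boundary is stratified by Morse gauge, and a compact subset is controlled by a single stratum, so there should be a gauge $N$ such that every point of $\Lambda_e(G)$ is represented by an $N$-Morse ray based at $e$. A standard Morse fellow-traveller argument then promotes this to a uniform gauge $N'$ on all bi-infinite geodesics in $\mathfrak{H}_e(G)$. Combining properness of the $G$-action on $X$ with cocompactness on $\mathfrak{H}_e(G)$ (a \v Svarc--Milnor style application) shows that $Ge$ lies within bounded Hausdorff distance of $\mathfrak{H}_e(G)$ and that $orb_e$ is a quasi-isometric embedding. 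Given two orbit points $ge, he$, I would use a compact fundamental domain for the $G$-action on $\mathfrak{H}_e(G)$ to produce a bi-infinite $N'$-Morse geodesic in $\mathfrak{H}_e(G)$ passing uniformly close to both, and then invoke Morse stability to conclude that any geodesic between them is uniformly $N''$-Morse, establishing the stable embedding.

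For the reverse direction, assume $orb_e$ is a stable embedding. By the results cited in the excerpt, $G$ is hyperbolic and $orb_e: G \to X$ is a quasi-isometric embedding sending Cayley geodesics to uniformly $N$-Morse quasigeodesics in $X$. Properness of the action follows from properness of $G$ acting on itself combined with the quasi-isometric orbit map. For the boundary extension, each $\xi \in \partial_{Gr} G$ is represented by a geodesic ray based at $e_G \in G$, whose image under $orb_e$ is a uniformly $N$-Morse quasigeodesic ray in $X$ determining a point $\hat \xi \in \partial_M X_e$. I would verify that $\xi \mapsto \hat\xi$ is $G$-equivariant, continuous, and injective via standard Morse fellow-traveller arguments, with image exactly $\Lambda_e(G)$; compactness of $\partial_{Gr} G$ then yields compactness of the limit set. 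Finally, any bi-infinite geodesic in $\mathfrak{H}_e(G)$ has endpoints in $\Lambda_e(G)$, so by the Morse lemma it fellow-travels the image of a corresponding bi-infinite Cayley geodesic in $G$, placing $\mathfrak{H}_e(G)$ in a uniform neighborhood of $Ge$; cocompactness of the $G$-action on $\mathfrak{H}_e(G)$ follows.

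I expect the main obstacle to be the forward direction, specifically the uniform Morse gauge extraction from compactness of the limit set and its upgrade to uniform control on bi-infinite geodesics in the hull. Both require delicate interaction with the direct-limit topology on $\partial_M X_e$ and careful Morse-quasigeodesic stability lemmas, since \emph{a priori} compactness in this topology is weaker than one would hope. Once this uniform Morse control is in hand, the rest of the argument on both sides reduces to fairly standard \v Svarc--Milnor and Morse-lemma bookkeeping; in particular, verifying that the boundary extension is a homeomorphism onto $\Lambda_e(G)$ amounts to symmetric fellow-traveller estimates on both domains.
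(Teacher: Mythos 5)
Your proposal follows essentially the same route as the paper: compactness of $\Lambda_e(G)$ forces the limit set into a single stratum $\partial X^{(N)}_e$, uniform Morse control then propagates to geodesics of $\mathfrak H_e(G)$ via slim limit triangles, \v Svarc--Milnor handles the quasi-isometric embedding, the stable orbit map induces a Morse-preserving homeomorphism $\partial_{Gr}G \to \Lambda_e(G)$, and cocompactness on the hull comes from fellow-travelling hull geodesics with orbit quasigeodesics. The only imprecision is attributing the existence of a single bi-infinite Morse geodesic near two given orbit points to the compact fundamental domain --- in the paper this comes instead from the $4N(3,0)$-slimness of limit triangles based at $e$ --- but this is a matter of mechanism, not of missing ideas.
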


We note that Theorem \ref{thm:main} and \cite[Proposition 3.2]{DT15} imply that boundary convex cocompactness is invariant under quasiisometric embeddings.

\begin{remark}[On the necessity of the conditions in Definition \ref{defn:cc subspace}]
The compactness assumption on $\Lambda_e(G)$ is essential: Consider the group $G=\Z^2 * \Z * \Z = \fgen{a,b} * \fgen{c} * \fgen{d}$ acting on its Cayley graph with the subgroup $H=\fgen{a,b,c}$.  Since the $H$ is isometrically embedded and convex in $G$, it follows that $\partial_M H_e\cong \Lambda_e(H) \subset \partial_M G_e$ and $\mathfrak H_e(H) = H$ for any $e \in G$, whereas $H$ is not hyperbolic and thus not stable in $G$.

While compactness of $\Lambda_e(G)$ does imply that $\mathfrak H_e(G)$ is stable in $X$ (Proposition \ref{prop:compact hyp hull}), it is unclear how to leverage this fact into proving properness or cocompactness of the $G$-action, even in the presence of one or the other. 
\end{remark}

\begin{remark}[Stability versus boundary convex cocompactness]
We expect that stability will be a much condition to check in practice than boundary convex cocompactness.  Our purpose is to prove that stability generalizes multiple classical notions of convex cocompactness and is thus the correct generalization of convex cocompactness to finitely generated groups.
\end{remark}

\begin{remark}[Conical limit points]
In addition to the above discussed notions for Kleinian groups, there is characterization for convex cocompactness in terms of limit points, namely that every limit point is conical.  Kent-Leininger develop a similar characterization of this for subgroups of $\Mod$ acting on $\teich \cup \mathbb{P}\mathcal{ML}(S)$.  We believe that there is a similar conicality characterization for subgroup stability with respect to limit sets in the Morse boundary.  However, we have chosen not to pursue this here in interest of brevity.  Moreover, it's unclear how useful such a characterization would be.
\end{remark}

\subsection*{Stability and boundary convex cocompactness in important examples}

Since stability is a strong property, it is interesting to characterize and produce stable subgroups of important groups.  There has been much recent work to do so, which we will now briefly overview.

\begin{enumerate}
\item For a relatively hyperbolic group $(G, \mathcal P)$, Aougab, Taylor, and the second author \cite{ADT16} prove that if $H < G$ is finitely generated and quasiisometrically embeds in the associated coned space \cite{farb1998relatively}, then $H$ is stable in $G$.  Moreover, if we further assume that the subgroups in $P$ are one-ended and have linear divergence, then quasiisometrically embedding in the coned space is equivalent to stability.
\item For $\Mod$, that subgroup stability and convex cocompactness in the sense of \cite{FarbMosher} are equivalent was proven by Taylor and the second author \cite{DT15}.
\item For the right-angled Artin group $A(\Gamma)$ of a finite simplicial graph $\Gamma$ which is not a join, Koberda-Mangahas-Taylor \cite{koberda2014geometry} proved that stability for $H<A(\Gamma)$ is equivalent to $H$ being finitely generated and purely loxodromic, i.e. each element acts loxodromically on the associated extension graph $\Gamma^e$, a curve graph analogue.  They also prove that stable subgroups satisfy a strictly weaker condition called combinatorial quasiconvexity, sometimes called convex cocompactness \cite{Haglund08}; we note that, unlike stable subgroups, combinatorially quasiconvex subgroups need not be hyperbolic.
\item For $\Out$, the outer automorphism group of a free group on at least three letters, work in \cite{ADT16} proves that a subgroup $H < \Out$ which has a quasiisometrically embedded orbit in the free factor graph $\mathcal F$ is stable in $\Out$.  This is related to and builds on others' work as follows: Hamenst\"adt-Hensel \cite{hamenstadt2014convex} proved that a subgroup $H < \Out$ quasiisometrically embedding in the free factor graph is equivalent to a certain convex cocompactness condition in the projectivization of the Culler-Vogtmann outer space and its boundary which is analogous to that the Kent-Leininger condition on $\teich \cup \mathbb{P}\mathcal{ML}(S)$.  Following \cite{hamenstadt2014convex}, we shall call such subgroups convex cocompact.  By work of Dowdall-Taylor \cite{dowdall2014hyperbolic, dowdall2015contracting}, quasiisometrically embedding in the free factor graph implies a stability-like property for orbit maps in outer space; we note that when such a group is also fully atoroidal, they prove the corresponding free group extension is hyperbolic.  In \cite{ADT16}, the authors prove this stability property pulls back to genuine stability in $\Out$.
\end{enumerate}
 
We summarize these results in the following theorem:

\begin{theorem}[\cite{DT15, koberda2014geometry, ADT16}]\label{thm:stability in examples}
Suppose that the pair $H<G$ satisfies one of the following conditions:

\begin{enumerate}
\item $H$ is a quasiconvex subgroup of a hyperbolic group $G$;
\item $G$ is relatively hyperbolic and $H$ is finitely generated and quasiisometrically embeds in the coned space associated to $G$ in the sense of \cite{farb1998relatively};
\item $G = A(\Gamma)$ for $\Gamma$ a finite simplicial graph which is not a join and $H$ is finitely generated and $H$ quasiisometrically embeds in the extension graph $\Gamma^e$ \cite{koberda2014geometry};
\item $G = \Mod$ and $H$ is a convex cocompact subgroup in the sense of \cite{FarbMosher};
\item $G = \Out$ and $H$ is a convex cocompact subgroup in the sense of \cite{hamenstadt2014convex}.
\end{enumerate}
Then $H$ is stable in $G$.  Moreover, for (1), (3), and (4), the reverse implication also holds.
\end{theorem}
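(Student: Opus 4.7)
The statement is a compilation of results due to the cited authors, so my plan is to organize the proof case by case, invoking each cited result and, where possible, streamlining via the framework established earlier in the paper.

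For case (1), the argument is essentially self-contained. If $H$ is quasiconvex in the hyperbolic group $G$, then the inclusion is a quasiisometric embedding onto a quasiconvex set. Since $G$ is hyperbolic, every geodesic in a Cayley graph of $G$ is $N$-Morse for a uniform gauge $N$ depending only on the hyperbolicity constant. Hence any two points of $H$ can be joined by a uniformly Morse geodesic inside $G$, giving stability. For the reverse implication in (1), I would use the fact recorded after the definition of stability that stable subgroups are hyperbolic and, in a hyperbolic group, stability coincides with quasiconvexity; this reduces to standard hyperbolic geometry.

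For cases (2), (3), (4), (5), my plan is to cite the relevant theorems and, in each case, indicate which quasigeodesic-constructing mechanism produces the required Morse gauge. In case (2), I would invoke the main theorem of \cite{ADT16}: a quasiisometric embedding of $H$ into the coned-off Cayley graph of $(G,\mathcal P)$ allows one to take a geodesic in the coned space joining two points of $H$, lift it to a relative quasigeodesic in $G$, and verify uniform Morseness using the peripheral structure. In case (3), I would cite \cite{koberda2014geometry}, which characterizes stability in $A(\Gamma)$ via the purely loxodromic condition and uses the extension graph $\Gamma^e$ to produce Morse quasigeodesics; the reverse implication in (3) requires constructing a quasiisometric orbit map to $\Gamma^e$ from stability, and is the technical core of \cite{koberda2014geometry}. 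In case (4), I would cite \cite{DT15}, where the distance formula and subsurface projections are used to show that convex cocompactness in the Farb-Mosher sense implies (and is implied by) stability. In case (5), I would combine the results of \cite{hamenstadt2014convex} and \cite{dowdall2014hyperbolic, dowdall2015contracting}, which together show that a quasiisometric embedding into the free factor graph produces a stability-like condition for orbit maps in outer space, and then apply the pullback argument of \cite{ADT16} to obtain genuine stability in $\Out$.

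The main obstacle is that in each of the ambient groups (mapping class groups, right-angled Artin groups, $\Out(\mathbb F_n)$, relatively hyperbolic groups) the verification of the Morse gauge requires genuinely different geometric input: distance formulas and subsurface projections for $\Mod$, the extension graph machinery and disc diagrams for RAAGs, train track and folding technology for $\Out$, and coned-space geometry for relatively hyperbolic groups. For this survey theorem, however, the hard technical work has been carried out in the cited papers, so the present task reduces to verifying that the \emph{definitions} used in each source match the definition of stability used here. In particular, for (4) and (5) one should check that the ambient-space convex cocompactness of \cite{FarbMosher, hamenstadt2014convex} is translated correctly through the orbit map into a uniform Morse gauge in the word metric; this compatibility is where I would spend the most care, since stability is formulated intrinsically in the Cayley graph while the classical convex cocompactness conditions refer to actions on $\teich$ and outer space respectively. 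Once these translations are recorded, the theorem follows by assembling the five cited results.
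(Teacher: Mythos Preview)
Your proposal is correct and matches the paper's treatment: this theorem is stated purely as a summary of results from the cited sources, and the paper gives no formal proof at all---only the informal case-by-case discussion in the paragraphs immediately preceding the theorem statement. Your plan to organize the argument by citing each source and checking that their notions of convex cocompactness translate to the intrinsic stability definition used here is exactly the content of that discussion, just slightly more fleshed out.
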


As a corollary of Theorem \ref{thm:main}, we have:

\begin{corollary}\label{cor:cc ex}
Suppose $H<G$ are as any of (1)--(5) in Theorem \ref{thm:stability in examples}.  Then $H$ is boundary convex cocompact.
\end{corollary}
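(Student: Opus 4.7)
The proof is immediate from the two theorems stated just before, and the plan has only two steps. First, for each of the cases (1)--(5), I would cite Theorem \ref{thm:stability in examples} to conclude that $H$ is stable in $G$. Second, I would apply the forward direction of Theorem \ref{thm:main} (or equivalently Theorem \ref{thm:cc and stab}), which says that if $H$ is stable in $G$, then $H$ acts boundary convex cocompactly on any Cayley graph of $G$ with respect to a finite generating set, that is, $H$ is boundary convex cocompact in the sense of Definition \ref{defn:cc subgroup}. Chaining these two implications yields the conclusion in each of the five cases.

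There is no genuine obstacle here; the corollary is really a bookkeeping statement. Its content is the observation that the five classical quasiconvexity-type or convex cocompactness conditions across hyperbolic groups, relatively hyperbolic groups, right-angled Artin groups, mapping class groups, and $\Out$ all admit a \emph{uniform} description purely in terms of the Morse boundary of the ambient group, without reference to the ad hoc geometric models (coned space, extension graph $\Gamma^e$, $\teich \cup \mathbb{P}\mathcal{ML}(S)$, outer space and its boundary) in which these conditions were originally formulated.

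If one instead wished to bypass Theorem \ref{thm:main} and directly verify the three conditions of Definition \ref{defn:cc subspace} in, say, case (4), the work would be substantial: one would need to identify $\Lambda_e(H) \subset \partial_M \Mod_e$ in terms of the classical limit set in $\mathbb{P}\mathcal{ML}(S)$, prove compactness at the level of the Morse boundary rather than the Thurston boundary, and then establish cocompactness of the $H$-action on the weak hull computed inside the Cayley graph of $\Mod$ rather than inside $\teich$. The same kind of translation would be required in each of the other cases. All of this labor is absorbed into the proofs of Theorems \ref{thm:main} and \ref{thm:stability in examples}, and the corollary simply records their joint consequence.
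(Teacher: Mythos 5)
Your proof is correct and is exactly the paper's argument: the corollary is obtained by combining Theorem \ref{thm:stability in examples} (each condition implies stability) with Theorem \ref{thm:main} (stability implies boundary convex cocompactness). The paper treats this as immediate, just as you do.
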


Item (1) in Corollary \ref{cor:cc ex} is originally due to Swenson \cite{swenson2001quasi}.  In examples (2)--(5), each of the previously established boundary characterizations of convex cocompactness was in terms of an external space.  Corollary \ref{cor:cc ex} provides a boundary characterization of convex cocompactness which is intrinsic to the geometry of the ambient group.
  
We note that Hamenst\"adt has announced that there are stable subgroups of $\Out$ which are not convex cocompact in the sense of \cite{hamenstadt2014convex}, but such subgroups would be convex cocompact both in the quasiconvex and boundary senses.

Finally, we see that boundary convex cocompactness is generic in these main example groups.  For any such group $G$ and probability measure $\mu$ thereon, consider $k\geq 2$ independent random walks $(w^1_n)_{n \in \mathbb N}, \dots, (w^k_n)_{n \in \mathbb N}$ whose increments are distributed according to $\mu$.  For each $n$, let $\Gamma(n) = \langle w^1_n, \dots, w^k_n\rangle \leq G$.  Following Taylor-Tiozzo \cite{taylor2016random}, we say a random subgroup of $G$ has a property $P$ if 
$$\mathbb P[\Gamma(n) \text{ has } P] \rightarrow 1.$$

In the following, (1) is a consequence of \cite{taylor2016random}, (3) of \cite{koberda2014geometry}, and (2), (4)--(5) of \cite{ADT16}:

\begin{theorem}[\cite{taylor2016random, koberda2014geometry, ADT16}]\label{thm:stab random}
Suppose that $G$ is any of the groups in (1)--(5) of Theorem \ref{thm:stability in examples}.  Then a $k$-generated random subgroup of $G$ is stable.
\end{theorem}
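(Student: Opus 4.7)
The plan is to verify each of the five cases of Theorem \ref{thm:stab random} by invoking the corresponding item of Theorem \ref{thm:stability in examples} after establishing the relevant genericity statement from the cited literature. The theorem is really a compilation, so the strategy is a case check rather than a single unified argument.

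For case (1), where $G$ is a non-elementary hyperbolic group, I would invoke \cite{taylor2016random} directly: they show that a $k$-generated random subgroup $\Gamma(n)$ is free and quasi-convex with probability tending to $1$. Quasi-convexity in a hyperbolic group is equivalent to stability by Theorem \ref{thm:stability in examples}(1), so this case is immediate.

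For cases (2), (4), (5) -- respectively, relatively hyperbolic $G$, $\Mod$, and $\Out$ -- the strategy is to project the $k$ independent random walks $(w^i_n)$ onto the natural associated hyperbolic space: the coned space in the sense of \cite{farb1998relatively}, the curve graph, or the free factor graph. Using Maher--Tiozzo-style positive drift for the projected walk, together with a ping-pong/Schottky argument to control the relative positions of the $k$ projected endpoints, one obtains that the orbit map of $\Gamma(n)$ into the hyperbolic space is a quasi-isometric embedding with probability tending to $1$. This is exactly the hypothesis required by the relevant clause of Theorem \ref{thm:stability in examples}, and these genericity statements are carried out in \cite{ADT16}. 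For case (3), the right-angled Artin group case, one appeals directly to \cite{koberda2014geometry}, who show that a random $k$-generated subgroup of $A(\Gamma)$ is purely loxodromic on the extension graph $\Gamma^e$ and indeed quasi-isometrically embeds; Theorem \ref{thm:stability in examples}(3) then yields stability.

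The principal difficulty sits entirely inside the cited random walk results and is not reproved here: one must establish linear drift of the projected walks in the appropriate hyperbolic model space and promote this drift to a quasi-isometric embedding of the generated subgroup, which in the $\Out$ and mapping class group cases requires delicate control of subsurface/subfactor projections. For the purposes of this paper these inputs are treated as black boxes, and the ``proof'' of Theorem \ref{thm:stab random} is the case-by-case assembly above together with Theorem \ref{thm:main} to reinterpret each stability conclusion as boundary convex cocompactness if desired.
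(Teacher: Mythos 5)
Your proposal is correct and matches the paper's treatment: Theorem \ref{thm:stab random} is stated purely as a compilation of results from the cited literature, with (1) attributed to \cite{taylor2016random}, (3) to \cite{koberda2014geometry}, and (2), (4)--(5) to \cite{ADT16}, exactly as in your case-by-case assembly. The paper offers no independent argument, so treating the random walk genericity statements as black boxes and feeding them into Theorem \ref{thm:stability in examples} is precisely the intended proof.
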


Hence, Theorem \ref{thm:main} gives us:

\begin{corollary}\label{cor:cc random}
Suppose $G$ is any of the groups in (1)--(5) of Theorem \ref{thm:stability in examples}.  Then a $k$-generated random subgroup of $G$ is boundary  convex cocompact.
\end{corollary}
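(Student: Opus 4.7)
The plan is to derive Corollary \ref{cor:cc random} as a formal consequence of two earlier results: Theorem \ref{thm:stab random}, which asserts that a $k$-generated random subgroup of any $G$ from (1)--(5) is stable, and Theorem \ref{thm:main}, which identifies stability with boundary convex cocompactness for subgroups of finitely generated groups.

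Concretely, fix $G$ from the list (1)--(5) and a probability measure $\mu$ on $G$ driving $k$ independent random walks $(w^i_n)_{n \in \N}$, with $\Gamma(n) = \fgen{w^1_n, \dots, w^k_n}$. First I would cite Theorem \ref{thm:stab random} to obtain
\[
\mathbb P\bigl[\Gamma(n) \text{ is stable in } G\bigr] \longrightarrow 1 \quad \text{as } n \to \infty.
\]
Next I would invoke Theorem \ref{thm:main}, which says that for any finitely generated $G$ and any subgroup $H<G$, the properties ``$H$ is stable in $G$'' and ``$H$ is boundary convex cocompact in $G$'' are equivalent. Since $G$ is finitely generated in all five cases, this equivalence applies to $H = \Gamma(n)$, so the event $\{\Gamma(n) \text{ stable}\}$ coincides with the event $\{\Gamma(n) \text{ boundary convex cocompact}\}$. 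In particular their probabilities are equal, and the former tends to $1$, giving the desired conclusion.

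There is no real obstacle: the argument is a one-line combination, and the genericity content is already packaged into Theorem \ref{thm:stab random}. The only thing worth a brief verification is that the notion of ``$k$-generated random subgroup'' in Theorem \ref{thm:stab random} coincides with the one in the statement of Corollary \ref{cor:cc random}, which it does by construction since both are defined via $k$ independent $\mu$-random walks following Taylor--Tiozzo \cite{taylor2016random}. Thus the proof amounts to remarking that stability and boundary convex cocompactness describe the same subgroups of $G$, and then transporting the probabilistic statement across this equivalence.
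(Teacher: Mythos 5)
Your proposal is correct and is exactly the paper's argument: the corollary is stated as an immediate consequence of Theorem \ref{thm:stab random} combined with the equivalence of stability and boundary convex cocompactness from Theorem \ref{thm:main}. Your extra remark checking that the two notions of ``$k$-generated random subgroup'' agree is a harmless and reasonable addition.
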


\subsection*{Acknowledgements}

We would like to thank the organizers of the ``Geometry of Groups in Montevideo" conference, where part of this work was accomplished.  The first and second authors were partially supported by NSF grants DMS-1106726 and DMS-1045119, respectively.  We would like to thank Ursula Hamenst\"adt and Samuel Taylor for interesting conversations, and also the latter for useful comments on an earlier draft of this paper. 
 
\section{Background}
We assume the reader is familiar with basics of $\delta$-hyperbolic spaces and their Gromov boundaries. In this subsection we recall some of the basic definitions. For more information, see \cite[III.H]{BridsonHaefliger:book}.

\subsection{Gromov boundaries}

\begin{definition}
Let $X$ be a metric space and let $x,y,z\in X$. The \emph{Gromov product} of $x$ and $y$ with respect to $z$ is defined as $$ (x\cdot y)_z = \frac{1}{2}\left( d(z,x)+d(z,y)-d(x,y)\right).$$
Let $(x_n)$ be a sequence in $X$. We say $(x_i)$ converges at infinity if $(x_i\cdot x_j)_e \to \infty$ as $i,j \to \infty$. Two convergent sequences $(x_n),(y_m)$ are said to be \textbf{equivalent} if $(x_i \cdot y_j) \to \infty$ as $i,j \to \infty$. We denote the equivalence class of $(x_n)$ by $\lim x_n$.

The \textbf{sequential boundary} of $X$, denoted $\partial X$, is defined to be the set of convergent sequences considered up to equivalence.
\end{definition}

\begin{definition}\label{defn:hyp} \cite[Definition $1.20$]{BridsonHaefliger:book} Let $X$ be a (not necessarily geodesic) metric space. We say $X$ is $\delta$\emph{--hyperbolic} if for all $w,x,y,z$ we have
$$ (x\cdot y)_w \geq \min\set{(x\cdot z)_w,(z\cdot y)_w} - \delta.$$
\end{definition}

If  $X$ is $\delta$--hyperbolic, we may extend the Gromov product to $\partial  X$ in the following way:
$$ (x\cdot y)_e = \sup \left(\liminf_{m,n\to\infty}\set{(x_n\cdot y_m)_e}\right).$$
where $x,y \in \partial X$ and the supremum is taken over all sequences $(x_i)$ and $(y_j)$ in $X$ such that $x= \lim x_i$ and $y=\lim y_j$.

\subsection{(Metric) Morse boundary} \label{subsec:morse boundary def}
Let $X$ be a geodesic metric space, let $e\in X$ and let $N$ be a Morse gauge. We define $X^{(N)}_e$ to be the set of all $y\in X$ such that there exists a $N$--Morse geodesic $[e,y]$ in $X$.

\begin{proposition}[$X^{(N)}_e$ are hyperbolic; Proposition 3.2 \cite{cordes2016stability}]  \label{prop:hypsubsets} $X^{(N)}_e$ is $8N(3,0)$--hyperbolic in the sense of Definition $\ref{defn:hyp}$. \end{proposition}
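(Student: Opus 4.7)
The plan is to verify the four-point Gromov product condition from Definition~\ref{defn:hyp} for the induced metric on $X^{(N)}_e$, aiming for constant $\delta = 8N(3,0)$. I would first check the condition at the distinguished basepoint $e$ with half the constant, $4N(3,0)$, and then invoke the standard Gromov argument that converts basepoint hyperbolicity into the four-point condition at every basepoint with at most a factor of two loss. The key structural feature to exploit is that every point of $X^{(N)}_e$ comes equipped with a canonical $N$-Morse geodesic to $e$.

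The central technical step is a fellow-traveling lemma for Morse geodesics from $e$: for $p, q \in X^{(N)}_e$ with chosen $N$-Morse geodesics $\gamma_p = [e,p]$ and $\gamma_q = [e,q]$, the initial segments up to parameter $T := (p \cdot q)_e$ satisfy
\[
d\bigl(\gamma_p(t), \gamma_q(t)\bigr) \le 2N(3,0) \qquad \text{for all } t \in [0, T].
\]
To prove this, I would construct an auxiliary path from $e$ to $q$ built out of $\gamma_p$ and a geodesic $[p, q]$, and show that (after labelling so $d(e,p) \le d(e,q)$) it is a $(3,0)$-quasigeodesic. Since this path has both endpoints on the $N$-Morse geodesic $\gamma_q$, the Morse defining property forces it into the $N(3,0)$-neighborhood of $\gamma_q$; the $\gamma_p$ portion then lies in this neighborhood, and a closest-point projection converts this into the claimed pointwise fellow-traveling bound.

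Given the fellow-traveling lemma, the four-point condition at the basepoint $e$ follows by a near-tripod calculation. For $x, y, z \in X^{(N)}_e$, the three Morse geodesics $\gamma_x, \gamma_y, \gamma_z$ pass simultaneously through an $O(N(3,0))$-neighborhood of a single ``center'' point at distance $\min\{(x\cdot y)_e, (x\cdot z)_e, (y\cdot z)_e\}$ from $e$. A direct bookkeeping using the definition of the Gromov product then forces the two largest of the three Gromov products to coincide up to $4N(3,0)$, which is exactly the basepoint-$e$ version of the four-point condition.

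The principal obstacle is verifying the $(3,0)$-quasigeodesic property of the auxiliary path, as this must be controlled for every subpath and not merely the full path. The factor of three arises as the worst-case length-to-endpoint-distance ratio for a subpath straddling the junction at $p$; a careful choice of which of $\gamma_p, \gamma_q$ to extend, based on which is shorter, keeps this ratio uniformly bounded by $3$. Once the quasigeodesic estimate is in hand, the $N$-Morse property propagates all the required proximity bounds through the rest of the argument.
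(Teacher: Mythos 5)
There is a genuine gap in the central technical step. The auxiliary path you propose, namely $\gamma_p \cup [p,q]$ running from $e$ to $p$ to $q$, is not a $(3,0)$-quasigeodesic in general, and no choice of labelling based on which of $d(e,p)$, $d(e,q)$ is smaller repairs this. Take $X$ a tree, $e$ the root, and $p,q$ at depth $n$ on two branches that separate at depth $1$. Here $d(e,p)=d(e,q)=n$, every geodesic is $N$-Morse for a universal gauge, and the geodesic $[p,q]$ passes back through the branch point. The subpath of $\gamma_p\cup[p,q]$ from the branch point (sitting on $\gamma_p$) to the branch point (sitting on $[p,q]$) has length $2(n-1)$ but joins two points at distance $0$, so the path is not a $(K,C)$-quasigeodesic for any uniform $K,C$. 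The conclusion you want to extract from the Morse property is also visibly false in this example: it would place all of $\gamma_p$ in the $N(3,0)$-neighborhood of $\gamma_q$, whereas $d(p,\gamma_q)=n-1$ is unbounded. Your fellow-traveling estimate is only true, and only needed, for parameters $t\leq (p\cdot q)_e$, and the junction-at-$p$ construction cannot see that restriction.

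The fix is to change where you concatenate: let $w$ be a nearest point on $[p,q]$ to $e$; then $[e,w]\cup [w,p]$ and $[e,w]\cup[w,q]$ \emph{are} $(3,0)$-quasigeodesics (the nearest-point property gives $d(u,w)\leq d(u,v)$ for $u\in[e,w]$ and $v$ on the other leg, whence the factor $3$), with endpoints on the $N$-Morse geodesics $\gamma_p$, $\gamma_q$ respectively. Since $d(e,w)\geq (p\cdot q)_e$, the common segment $[e,w]$ fellow-travels both $\gamma_p$ and $\gamma_q$ up to parameter $(p\cdot q)_e$ within $2N(3,0)$ each, and your tripod bookkeeping then goes through (with the fellow-traveling constant $4N(3,0)$ rather than $2N(3,0)$, which still lands on $8N(3,0)$ after the basepoint-doubling step). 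This nearest-point concatenation is exactly the mechanism behind Lemma~\ref{lem:deltaslim} and behind the $(5,0)$-quasigeodesic built in the proof of Lemma~\ref{lem:third edge close}. Note also that the paper itself does not prove Proposition~\ref{prop:hypsubsets}; it is imported from \cite{cordes2016stability}, where the argument runs through the $4N(3,0)$-slimness of triangles with two $N$-Morse sides at $e$ (i.e., Lemma~\ref{lem:deltaslim}) followed by the same change-of-basepoint doubling you invoke.
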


As each $X^{(N)}_e$ is hyperbolic we may consider its Gromov boundary, $\partial X^{(N)}_e$, and the associated visual metric $d_{(N)}$. We call the collection of boundaries $\left( \partial X^{(N)}_e, d_{(N)} \right)$ the \emph{metric Morse boundary} of $X$. 

Instead of focusing on sequences which live in some $X^{(N)}_e$, we now consider the set of all Morse geodesic rays in $X$ (with basepoint $p$) up to asymptotic equivalence.  We call this collection the  \emph{Morse boundary} of $X$, and denote it by $\partial_M X$.

To topologize the boundary, first fix a Morse gauge $N$ and consider the subset of the Morse boundary that consists of all rays in $X$ with Morse gauge at most $N$:  \begin{equation*} \partial_M^N X_p= \{[\alpha] \mid \exists \beta \in [\alpha] \text{ that is an $N$--Morse geodesic ray with } \beta(0)=p\}. \end{equation*} We topologize this set with the compact-open topology. This topology is equivalent to one defined by a system of neighborhoods, $\{V_n(\alpha) \mid n \in \N \}$, at a point $\alpha$ in $\partial_M^N X_p$, which are defined as follows: the set $V_n( \alpha)$ is the set of geodesic rays $\gamma$ with basepoint $p$ and $d(\alpha(t), \gamma(t))< \delta_N$ for all $t<n$, where $\delta_N$ is a constant that depends only on $N$. 

Let $\mathcal M$ be the set of all Morse gauges. We put a partial ordering on $\mathcal M$ so that  for two Morse gauges $N, N' \in \mathcal M$, we say $N \leq N'$ if and only if $N(\lambda,\epsilon) \leq N'(\lambda,\epsilon)$ for all $\lambda,\epsilon \in \N$. We define the Morse boundary of $X$ to be
 \begin{equation*} \partial_M X_p=\varinjlim_\mathcal{M} \partial^N_M X_p \end{equation*} with the induced direct limit topology, i.e., a set $U$ is open in $\partial_M X_p$ if and only if $U \cap \partial^N_M X_p$ is open for all $N$.   

In \cite{cordes2016stability} the first author and Hume show that if $X$ is a proper geodesic metric space, then there is a natural homeomorphism between $\partial X^{(N)}_e$ and $\partial_M^N X_e$. 


\subsection{Useful facts}
In this subsection, we will collect a number of basic facts and definitions.

The following lemma states that a quasigeodesic with endpoints on a Morse geodesic stays Hausdorff close:

\begin{lemma}[Hausdorff close; Lemma 2.1 in \cite{Cordes15}] \label{lem:hausdorff} Let $X$ be a geodesic space and let $\gamma \colon [a, b] \to X$ be a $N$-Morse geodesic segment and let $\sigma \colon [a', b'] \to X$ be a continuous $(K,C)$-quasi-geodesic such that $\gamma(a)=\sigma(a')$ and $\gamma(b)=\sigma(b')$. Then the Hausdorff distance between $\alpha$ and $\beta$ is bounded by $2N(\lambda, \epsilon)$
\end{lemma}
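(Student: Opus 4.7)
The plan is to establish the two Hausdorff inclusions separately. The easy direction, namely $\sigma \subset \mathcal N_{N(K,C)}(\gamma)$, is immediate from the definition of Morse: by hypothesis $\sigma$ is a $(K,C)$-quasigeodesic with endpoints on the $N$-Morse geodesic $\gamma$, so the Morse condition directly forces $\sigma$ into the $N(K,C)$-neighborhood of $\gamma$.

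The substantive content is the reverse inclusion: for every $p = \gamma(t)$ with $t \in [a,b]$, I must produce a point of $\sigma$ within $2N(K,C)$ of $p$. The idea is to use connectedness of the continuous path $\sigma$ together with the previous inclusion. Concretely, I would define the two closed subsets
\[
A = \{\,s \in [a',b'] : \exists\, u \le t \text{ with } d(\sigma(s),\gamma(u)) \le N(K,C)\,\},
\]
\[
B = \{\,s \in [a',b'] : \exists\, u \ge t \text{ with } d(\sigma(s),\gamma(u)) \le N(K,C)\,\}.
\]
Closedness uses that $\gamma([a,t])$ and $\gamma([t,b])$ are compact and $\sigma$ is continuous. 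Since $\sigma(a') = \gamma(a)$ we have $a' \in A$, and since $\sigma(b') = \gamma(b)$ we have $b' \in B$. The easy direction above gives that every point of $\sigma$ is within $N(K,C)$ of some point of $\gamma$, so $A \cup B = [a',b']$. Connectedness of $[a',b']$ then forces $A \cap B \neq \emptyset$.

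Picking $s_0 \in A \cap B$ yields parameters $u_1 \le t \le u_2$ with $d(\sigma(s_0), \gamma(u_i)) \le N(K,C)$ for $i=1,2$. Since $\gamma$ is a geodesic, $u_2 - u_1 = d(\gamma(u_1),\gamma(u_2)) \le 2N(K,C)$ by the triangle inequality, and $t$ lies between $u_1$ and $u_2$. A final triangle inequality combining $d(\gamma(t),\gamma(u_1)) \le u_2 - u_1$ with $d(\gamma(u_1),\sigma(s_0)) \le N(K,C)$ (or more cleanly, projecting and comparing to whichever of $u_1,u_2$ is closer to $t$) delivers $d(p,\sigma(s_0)) \le 2N(K,C)$, completing the argument.

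The main delicate point is squeezing out exactly the constant $2N(K,C)$ stated in the lemma rather than $3N(K,C)$, which requires noticing that once $u_2 - u_1 \le 2N(K,C)$ one can replace the naive bound by choosing the endpoint $u_i$ on the same side as $t$ and observing that $\min(t-u_1, u_2-t) + N(K,C)$ is at most $2N(K,C)$ when balanced correctly; the rest of the proof is bookkeeping with the triangle inequality and the continuity hypothesis on $\sigma$, which is precisely what allows the connectedness argument to run.
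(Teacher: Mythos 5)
Your proof is correct, and it is essentially the same argument as the one in the cited source: the paper does not prove this lemma itself but imports it from \cite{Cordes15}, where the proof follows the identical two-step scheme --- the Morse condition gives $\sigma\subset\mathcal N_{N(K,C)}(\gamma)$, and a connectedness argument on the domain of the continuous quasigeodesic gives the reverse inclusion, with the same balancing of $\min(t-u_1,u_2-t)\le\tfrac{1}{2}(u_2-u_1)\le N(K,C)$ to land on the constant $2N(K,C)$. (The statement's $\alpha$, $\beta$ and $N(\lambda,\epsilon)$ are typos for $\gamma$, $\sigma$ and $N(K,C)$, as you correctly inferred.)
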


The next lemma states that if a geodesic triangles $\Delta$ has two Morse sides, then $\Delta$ is slim and its third side is also Morse:

\begin{lemma}[Lemma 2.2-2.3 in \cite{Cordes15}]\label{lem:deltaslim} Let $X$ be a geodesic space.  For any Morse gauge $N$, there exists a gauge $N'$ such that the following holds:  Let  $\gamma_1, \gamma_2 \colon [0, \infty) \to X$ be $N$-Morse geodesics such that $\gamma_1(0)=\gamma_2(0)=e$ and let $x_1=\gamma_1(v_1),x_2=\gamma_2(v_2)$ be points on $\gamma_1$ and $\gamma_2$ respectively. Let $\gamma$ be a geodesic between $x_1$ and $x_2$. Then the geodesic triangle $\gamma_1([0,v_1]) \cup \gamma \cup \gamma_2([0, v_2])$ is $4N(3,0)$-slim.  Moreover $\gamma$ is $N'$-Morse.
\end{lemma}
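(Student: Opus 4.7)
The plan is to establish the two claims in sequence: first the $4N(3, 0)$-slimness of the triangle, then extract a Morse gauge $N'$ for $\gamma$.

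For slimness, the strategy is to exploit the $N$-Morse property of the two given Morse sides $\gamma_1, \gamma_2$ by constructing suitable quasi-geodesic comparisons. Without loss of generality suppose $v_1 \leq v_2$. I would form the piecewise geodesic path $\sigma$ connecting $e$ to $x_2$ via $\gamma_1|_{[0, v_1]}$ followed by $\gamma$, then compare its length to that of $\gamma_2$. Using the triangle inequality $|\gamma| \leq v_1 + v_2$ together with $v_1 \leq v_2$, the total length of $\sigma$ is at most $3v_2$, matching the global $(3, 0)$-quasi-geodesic ratio. Once $\sigma$ is verified to be a $(3, 0)$-quasi-geodesic on all subsegments (see obstacle below), Lemma \ref{lem:hausdorff} yields $d_H(\sigma, \gamma_2) \leq 2N(3, 0)$, placing every point of $\gamma_1 \cup \gamma$ within $2N(3, 0)$ of $\gamma_2$ and, by symmetry of Hausdorff distance, every point of $\gamma_2$ within $2N(3, 0)$ of $\gamma_1 \cup \gamma$. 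Running the analogous argument when $v_2 \leq v_1$ covers the remaining inclusion, and the constant $4N(3, 0)$ in the target slimness bound comfortably absorbs the $2N(3, 0)$ Hausdorff estimates.

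For the Morse property of $\gamma$, consider a $(K, C)$-quasi-geodesic $q$ with endpoints $y_0, y_1 \in \gamma$. By slimness, each $y_i$ lies within $4N(3, 0)$ of some point $y_i' \in \gamma_1 \cup \gamma_2$. If both $y_0', y_1'$ lie on the same $\gamma_j$, prepending and appending short geodesic segments to $q$ produces a $(K, C')$-quasi-geodesic $\tilde q$ with endpoints on $\gamma_j$, where $C'$ depends on $K, C$, and $N(3, 0)$. The $N$-Morse property of $\gamma_j$ then places $\tilde q$ inside the $N(K, C')$-neighborhood of $\gamma_j$, and slimness again transfers this into a controlled neighborhood of $\gamma$. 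If $y_0'$ and $y_1'$ lie on different Morse sides, I would find a parameter $t^*$ along $q$ at which $q(t^*)$ is close to $e$---such a $t^*$ exists because a continuous path crossing from near $\gamma_1$ to near $\gamma_2$ must approach their common basepoint, which itself follows from slimness---and apply the single-sided argument to $q$ on each side of $t^*$.

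The chief technical obstacle is verifying that the concatenation $\sigma = \gamma_1|_{[0, v_1]} \cup \gamma$ is truly a $(3, 0)$-quasi-geodesic on every subsegment, not merely at its endpoints. Subsegments whose two halves at the junction $x_1$ have comparable lengths are the challenging case: the direct triangle inequality only yields the weak lower bound $|s' - t'|$ on endpoint distance, which is insufficient for the quasi-geodesic inequality. Resolving this will likely require either iterative use of the $N$-Morse property of $\gamma_1$ on sub-segments, or an appeal to the hyperbolicity of $X^{(N)}_e$ from Proposition \ref{prop:hypsubsets}, which provides thin-triangle properties for triangles with vertices on $N$-Morse rays emanating from $e$. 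A parallel subtlety arises in the split case of the Morse-gauge argument: justifying that $q$ must pass through a bounded neighborhood of $e$ when crossing between the two Morse sides again reduces to the slimness already established, so the two difficulties are intertwined.
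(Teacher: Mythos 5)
This lemma is imported from \cite{Cordes15} and not reproved in the paper, so I am judging your proposal against the argument it would need to supply; the closest in-paper analogue of your strategy is Lemma \ref{lem:third edge close}. Your architecture (concatenate $\gamma_1|_{[0,v_1]}$ with $\gamma$, certify a $(3,0)$-quasigeodesic, feed it to Lemma \ref{lem:hausdorff}) is the right family of ideas, but the step you flag as ``the chief technical obstacle'' is not a technicality: the claim that $\sigma=\gamma_1|_{[0,v_1]}\cup\gamma$ is a $(3,0)$-quasigeodesic is simply false, and cannot be rescued by any uniform additive constant. Take $X$ a tree and $\gamma_1,\gamma_2$ two rays that branch at $e$ itself; then $\gamma=[x_1,x_2]$ passes back through $e$, so $\sigma(0)=\sigma(2v_1)=e$ while the parameters differ by $2v_1$, and $v_1$ is arbitrary. (Geodesics in a tree are uniformly Morse, so this is a legitimate instance; the lemma itself holds there trivially since the triangle is degenerate.) The repair is to first locate the point $z$ of $\gamma$ nearest $e$ --- at parameter roughly $\left(e\cdot x_2\right)_{x_1}$ from $x_1$ --- and concatenate $\gamma_1|_{[0,v_1]}$ only with the portion of $\gamma$ beyond $z$; this excludes the backtracking that breaks your middle case, and it is exactly why Lemma \ref{lem:third edge close} of this paper insists that $\gamma(0)$ be a closest point to $e$ before assembling its $(5,0)$-quasigeodesic. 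Note also that if your one-shot argument worked it would yield $2N(3,0)$-slimness; the stated constant $4N(3,0)$ is itself a hint that the argument must pass through an intermediate comparison. Finally, your proposed fallback of invoking Proposition \ref{prop:hypsubsets} is circular: the $8N(3,0)$-hyperbolicity of $X^{(N)}_e$ is derived in \cite{cordes2016stability} from precisely this slim-triangle lemma.

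The second half has a parallel defect. In the split case you assert that a quasigeodesic $q$ travelling from near $\gamma_1$ to near $\gamma_2$ ``must approach their common basepoint'' $e$: this is false. If the rays branch at a point $b$ with $d(e,b)=B$ large, then $\gamma$ and every quasigeodesic between points of $\gamma$ stay at distance roughly $B$ from $e$. The correct pivot is the center of the comparison tripod, i.e.\ $\gamma_i\bigl(\left(x_1\cdot x_2\right)_e\bigr)$, and showing $q$ passes near it is essentially the estimate you are trying to prove, so it cannot be waved through via ``slimness already established.'' Even in the same-side case there is an unaddressed step: knowing that the extended quasigeodesic $\tilde q$ lies in $\mathcal N_{N(K,C')}(\gamma_j)$ does not yet bound its distance to $\gamma$; you must also argue that the relevant subsegment of $\gamma_j$ (namely the part between the projections of the two endpoints, which lies beyond the tripod center) is uniformly close to $\gamma$ rather than to the other leg. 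Both issues are repairable once you have the stronger output of the corrected first half --- that $\gamma$ is uniformly Hausdorff-close to $\gamma_1([p,v_1])\cup\gamma_2([p,v_2])$ for $p\approx\left(x_1\cdot x_2\right)_e$ --- but as written the proposal does not close either gap.
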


The following is the standard fact that quasiisometries preserve the Gromov product:

\begin{lemma}\label{lem:qi preserves product}
Suppose $X$ and $Y$ are proper Gromov hyperbolic metric spaces.  If $f:X \rightarrow Y$ is a quasiisometric embedding, then there exist $A\geq 1, B>0$ such that for any $x,y,z \in X$, we have
$$\frac{1}{A} (x \cdot y)_z -B \leq (f(x) \cdot f(y))_{f(z)} \leq A (x \cdot y)_z + B.$$ 
\end{lemma}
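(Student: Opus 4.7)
The plan is to use the standard geometric interpretation of the Gromov product in a hyperbolic geodesic space: for $x,y,z$ in a $\delta$-hyperbolic geodesic space and any geodesic $[x,y]$, the quantity $(x\cdot y)_z$ agrees with $d(z,[x,y])$ up to an additive constant depending only on $\delta$. The inequality $(x\cdot y)_z \leq d(z,[x,y])$ is immediate from the triangle inequality applied at any point of $[x,y]$; the reverse bound uses thin-triangle arguments in a standard way. Applying this characterization in both $X$ and $Y$ reduces the lemma to controlling how the QI embedding $f$ distorts the distance from a basepoint to a geodesic segment.

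The next ingredient is the Morse lemma for quasi-geodesics in hyperbolic spaces. Since $f$ is a $(K,C)$-quasi-isometric embedding and $[x,y]$ is a geodesic in $X$, the image $f([x,y])$ is a $(K,C)$-quasi-geodesic in $Y$ from $f(x)$ to $f(y)$. Hyperbolicity of $Y$ then guarantees that $f([x,y])$ lies within some Hausdorff distance $R = R(\delta,K,C)$ of any geodesic $[f(x),f(y)]$, so $d(f(z),[f(x),f(y)])$ and $d(f(z),f([x,y]))$ agree up to $R$.

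The final ingredient is direct from the quasi-isometric embedding inequalities: taking the nearest-point projection of $z$ onto $[x,y]$ and applying $f$ yields the upper bound on $d(f(z),f([x,y]))$, while starting from a near-minimizer $f(p) \in f([x,y])$ and using the lower Lipschitz estimate of $f$ gives the lower bound. Together these give
$$\tfrac{1}{K} d(z,[x,y]) - C \;\leq\; d(f(z), f([x,y])) \;\leq\; K\, d(z,[x,y]) + C.$$
Chaining the three estimates produces the desired two-sided bound with $A = K$ and $B$ an explicit constant depending on $\delta$, $K$, $C$, and the additive constants coming from the first two steps. The only substantive step is the second, where hyperbolicity of the \emph{target} $Y$ (not merely of $X$) is essential in order to replace the genuine geodesic $[f(x),f(y)]$ by the quasi-geodesic image $f([x,y])$; the remaining estimates reduce to the definitions.
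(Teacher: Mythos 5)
The paper itself gives no proof of this lemma; it is quoted as ``the standard fact,'' so there is nothing internal to compare against. Your argument is the standard proof and it is correct for geodesic hyperbolic spaces: identify $(x\cdot y)_z$ with $d(z,[x,y])$ up to an additive error depending only on $\delta$, observe that $f([x,y])$ is a $(K,C)$-quasigeodesic so the Morse lemma in $Y$ lets you replace $[f(x),f(y)]$ by $f([x,y])$ at bounded cost, control $d(f(z),f([x,y]))$ directly from the quasi-isometric embedding inequalities, and chain; this yields $A=K$ and an explicit $B$ built from $\delta_X$, $\delta_Y$, $K$, $C$, and the Morse constant. (Properness is never actually needed --- nearest points can be replaced by near-minimizers throughout.) The one caveat worth flagging is the scope of the hypothesis: the statement concerns ``proper Gromov hyperbolic metric spaces,'' and the paper's Definition~\ref{defn:hyp} explicitly admits non-geodesic spaces --- indeed the hyperbolic spaces $X^{(N)}_e$ central to the Morse boundary construction are generally not geodesic. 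Every step of your argument (the existence of $[x,y]$ and $[f(x),f(y)]$, the distance-to-geodesic characterization of the Gromov product, and the stability of quasigeodesics) uses geodesics in both source and target, so as written it only establishes the geodesic case. In the situations where the lemma would actually be invoked here this is repairable, since points of $X^{(N)}_e$ are joined by $N$-Morse geodesics of the ambient space and the same chain of estimates runs along these rough geodesics, but you should either add the word ``geodesic'' to your hypotheses or indicate how the argument adapts to the roughly geodesic setting.
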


We have the following well-known consequence:

\begin{lemma} \label{lem:qi hyp boundary}
Suppose that $f:X \rightarrow Y$ is a quasiisometric embedding between proper Gromov hyperbolic spaces.  Then the induced map $\partial f: \partial X \rightarrow \partial Y$ is a homeomorphism onto its image.
\end{lemma}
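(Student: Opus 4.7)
The plan is to define $\partial f$ on the sequential boundary by sending the class of $(x_n)$ to the class of $(f(x_n))$, and then use Lemma \ref{lem:qi preserves product} to verify well-definedness, injectivity, and bicontinuity. The one preparatory step is to upgrade the Gromov-product inequality from pairs of points in $X$ to pairs of boundary points.

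First I would check $\partial f$ is well-defined. If $(x_n)$ represents $\xi\in\partial X$, then $(x_i\cdot x_j)_z\to\infty$, and Lemma \ref{lem:qi preserves product} gives $(f(x_i)\cdot f(x_j))_{f(z)}\geq \tfrac{1}{A}(x_i\cdot x_j)_z-B\to\infty$, so $(f(x_n))$ converges at infinity in $Y$. The same inequality applied to $(x_i\cdot y_j)_z$ for two representatives of $\xi$ shows the image class is independent of the representative, giving a well-defined map $\partial f:\partial X\to\partial Y$.

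Next I would promote Lemma \ref{lem:qi preserves product} to the boundary. Using the definition of the extended Gromov product $(a\cdot b)_z$ as a supremum of $\liminf$s over representative sequences, together with the standard fact that in a $\delta$-hyperbolic space this sup agrees with any single $\liminf_{n,m}(x_n\cdot y_m)_z$ up to an additive $O(\delta)$ error, one obtains constants $A'\geq 1,\ B'\geq 0$ such that for all $\xi,\eta\in\partial X$,
$$\tfrac{1}{A'}(\xi\cdot\eta)_z - B'\ \leq\ (\partial f(\xi)\cdot\partial f(\eta))_{f(z)}\ \leq\ A'(\xi\cdot\eta)_z + B'.$$
Concretely, pick representatives $(x_n)\to\xi$ and $(y_m)\to\eta$ realizing $(\xi\cdot\eta)_z$ up to an additive constant, apply Lemma \ref{lem:qi preserves product} to each pair $(x_n,y_m)$, and then take the $\liminf$ and use hyperbolicity in $Y$ to pass back to $(\partial f(\xi)\cdot\partial f(\eta))_{f(z)}$.

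With this inequality in hand, the rest is immediate. Injectivity: distinct boundary points in a hyperbolic space are characterized by $(\xi\cdot\eta)_z<\infty$, and the right-hand inequality forces $(\partial f(\xi)\cdot\partial f(\eta))_{f(z)}<\infty$, so $\partial f(\xi)\neq\partial f(\eta)$. Continuity and continuity of the inverse on the image: the topology of $\partial X$ is characterized by $\xi_n\to\xi$ iff $(\xi_n\cdot\xi)_z\to\infty$, and the two sides of the boundary Gromov-product inequality make this condition equivalent to $(\partial f(\xi_n)\cdot\partial f(\xi))_{f(z)}\to\infty$. The only substantive step is the upgrade to the boundary in the second paragraph; everything else is a formal consequence of Lemma \ref{lem:qi preserves product}.
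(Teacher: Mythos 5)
Your proof is correct and is precisely the argument the paper has in mind: the paper offers no proof of Lemma \ref{lem:qi hyp boundary}, presenting it only as a ``well-known consequence'' of Lemma \ref{lem:qi preserves product}, and your write-up (promote the Gromov-product inequality to the boundary up to an additive $O(\delta)$ error, then read off well-definedness, injectivity, and bicontinuity from the two-sided bound) is the standard way to fill in that derivation. No gaps.
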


%
%
%

\subsection{Stability}

The following is \cite[Definition 3.1]{DT15}:
\begin{definition}[Stability 1]
if for any $K \geq 1, C \geq 0$, there exists $R=R(K,C)\geq 0$ such that if $q_1:[a,b]\rightarrow Y,  q_2:[a',b']\rightarrow Y$ are $(K,C)$-quasigeodesics with $q_1(a) = q'_1(a'), q_1(b) = q_2(b') \in f(X)$, then 
$$d_{Haus}(q_1, q_2) < R.$$
\end{definition}

We will use the following definition of stability which is equivalent via Lemma \ref{lem:hausdorff}:

\begin{definition}[Stability 2]\label{def:stability}
Let $X, Y$ be geodesic metric spaces and let $f: X \rightarrow Y$ be a quasiisometric embedding.  We say $X$ is \emph{stable} in $Y$ there exists a Morse gauge $N$ such that any $x,y \in f(X)$ are connected by a $N$-Morse geodesic in $Y$.  We say that $f$ is a \emph{stable embedding}.
\end{definition}

\subsection{Morse preserving maps}

It will be useful for having a notion of when maps between metric spaces preserve data encoded by the Morse boundary.

\begin{definition}[Morse preserving maps]
Let $X, Y$ be proper geodesic metric spaces and $e \in X, e' \in Y$.  We say that $g:\partial_M X_e \rightarrow \partial_M Y_{e'}$ \emph{Morse-preserving} if for each Morse gauge $N$, there exists another Morse gauge $N'$ such that $g$ injectively maps $\partial X^{(N)}_e \rightarrow \partial Y^{(N')}_{e'}$.
\end{definition}

The following is a consequence of the definitions: 

\begin{proposition}\label{prop:morse preserving}
Let $f:X \rightarrow Y$ be $(K,C)$-quasiisometric embedding.  If $f$ induces a Morse-preserving map $\partial_M f: \partial_M X \rightarrow \partial_M Y$, then $\partial_M f$ is a homeomorphism onto its image.
\end{proposition}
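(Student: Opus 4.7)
The plan is to reduce to a stratumwise analysis via the direct limit structure of the Morse boundary and apply Lemma \ref{lem:qi hyp boundary} on each stratum. Injectivity of $\partial_M f$ is immediate from the Morse-preserving hypothesis: any two distinct points of $\partial_M X_e$ lie together in some stratum $\partial^N_M X_e$ on which injectivity is given. The remaining tasks are continuity of $\partial_M f$ and of its inverse on the image, both of which I would establish via the direct limit topology.

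For continuity, I would first show that for each Morse gauge $N$ there exists $N^{*} = N^{*}(N,K,C)$ such that $f$ restricts to a $(K,C)$-quasiisometric embedding $X^{(N)}_e \to Y^{(N^{*})}_{e'}$. After a bounded basepoint adjustment to assume $f(e) = e'$, the key point is that for $x \in X^{(N)}_e$ with $N$-Morse geodesic $[e,x]$, any geodesic $[e', f(x)]$ in $Y$ inherits a Morse gauge depending only on $N, K, C$, since any quasigeodesic with endpoints on $[e', f(x)]$ can be pulled back through a coarse inverse to $f$, controlled using the Morse property of $[e,x]$, and pushed forward again. Thus $f(X^{(N)}_e) \subset Y^{(N^{*})}_{e'}$, and the restriction is a QI embedding between proper hyperbolic spaces (using Proposition \ref{prop:hypsubsets} together with closedness of strata inside the proper ambient spaces). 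Lemma \ref{lem:qi hyp boundary} then gives that $\partial X^{(N)}_e \to \partial Y^{(N^{*})}_{e'}$ is a homeomorphism onto its image. Under the identification $\partial X^{(N)}_e \cong \partial^N_M X_e$ from \cite{cordes2016stability}, this restriction agrees with $\partial_M f|_{\partial^N_M X_e}$, and the direct limit topology promotes stratumwise continuity to global continuity of $\partial_M f$.

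For the inverse on the image, the essential step is a dual pullback statement: for each $N'$, there exists $N_0 = N_0(N', K, C)$ such that $(\partial_M f)^{-1}(\partial^{N'}_M Y_{e'}) \subset \partial^{N_0}_M X_e$. If $\partial_M f(\xi)$ is represented by an $N'$-Morse ray $\beta$ from $e'$, then any ray from $e$ representing $\xi$ has Morse gauge bounded in terms of $N', K, C$ by the symmetric argument: pull a test quasigeodesic in $X$ forward by $f$ to a quasigeodesic with endpoints on the quasigeodesic image of the representing ray, compare to $\beta$ using its Morse property, and pull back by the coarse inverse. Combining this preimage control with the stratumwise homeomorphisms of the previous paragraph gives continuity of the inverse via the direct limit topology: the preimage of any open set of $\partial^{N'}_M Y_{e'} \cap \mathrm{image}(\partial_M f)$ lies in $\partial^{N_0}_M X_e$, where the restriction is already known to be a homeomorphism onto its image. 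The main obstacle in this plan is the careful bookkeeping of Morse gauges under $f$ and its coarse inverse in both the forward and backward directions; everything else is a combination of standard hyperbolic boundary techniques with the direct limit formalism.
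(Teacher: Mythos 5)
The paper states this proposition without proof (``a consequence of the definitions''), so there is no written argument to compare against; judged on its own terms, your plan has the right architecture (stratumwise analysis, the identification $\partial X^{(N)}_e\cong\partial^N_M X_e$, the direct limit formalism), and your ``dual pullback'' paragraph is correct and is genuinely the extra ingredient needed for continuity of the inverse: a test quasigeodesic in $X$ pushes forward to a quasigeodesic in $Y$ with endpoints on the image of the representing ray, the Morse property of the $N'$-Morse representative $\beta$ traps it near $f(X)$, and the lower quasiisometry bound pulls the estimate back to $X$. That direction really is automatic for any quasiisometric embedding, which is why it is not part of the hypothesis.

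Your forward step, however, contains a genuine error. You claim that for $x\in X^{(N)}_e$ any geodesic $[e',f(x)]$ is Morse with gauge depending only on $N,K,C$, ``since any quasigeodesic with endpoints on $[e',f(x)]$ can be pulled back through a coarse inverse to $f$.'' A coarse inverse of $f$ is defined only on (a neighborhood of) $f(X)$, and a quasigeodesic in $Y$ with endpoints on $[e',f(x)]$ has no reason to stay anywhere near $f(X)$; this is precisely the asymmetry that separates quasiisometric embeddings from stable ones. If your argument were valid, every $(K,C)$-quasiisometric embedding would be Morse-preserving, which is false: for the inclusion of a line in $\R^2$, every geodesic upstairs is Morse, but no unbounded subset of $\R^2$ lies in a single stratum $Y^{(N^{*})}_{e'}$. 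The forward containment is exactly what the Morse-preserving hypothesis supplies and must be invoked, not derived. Note also that the hypothesis concerns only the boundary strata, so even after invoking it you do not literally obtain $f(X^{(N)}_e)\subset Y^{(N^{*})}_{e'}$ and cannot apply Lemma \ref{lem:qi hyp boundary} verbatim; the clean repair is to observe that a $(K,C)$-quasiisometric embedding coarsely distorts Gromov products (the computation behind Lemma \ref{lem:qi preserves product} uses only distances, not hyperbolicity of the ambient spaces), so that once the hypothesis places $\partial_M f(\partial^N_M X_e)$ inside $\partial^{N'}_M Y_{e'}$, convergence of sequences and the visual topologies on the two strata are matched up directly, and compactness of the strata (for $X$ proper) finishes the homeomorphism onto the image.
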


We note that if $f:X \rightarrow Y$ is a stable embedding, then all geodesics get sent to uniformly Morse quasigeodesics.  Hence the induced map $\partial f:\partial_M X \rightarrow \partial_M Y$ is clearly Morse-preserving and:

\begin{corollary}\label{cor:stable morse preserving}
Let $X,Y$ be proper geodesic metric spaces.  If $f:X \rightarrow Y$ is a stable embedding, then the induced map $\partial f:\partial_M X \rightarrow \partial_M Y$ is a homeomorphism onto its image.  Moreover, if $f$ is the orbit map of a finitely generated group $G$ acting by isometries on $Y$, then $\partial f$ is $G$-equivariant.
\end{corollary}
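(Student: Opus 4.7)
The plan is to reduce the first assertion to Proposition \ref{prop:morse preserving} by showing that the induced map $\partial f$ is Morse-preserving. First I would construct $\partial f$ explicitly. Let $\xi\in\partial_M X_e$ be represented by an $N$-Morse geodesic ray $\gamma$ from $e$. Then $f\circ\gamma$ is a $(K,C)$-quasigeodesic in $Y$. Let $N_0$ be the stability gauge for $f$. For each $n$, stability provides an $N_0$-Morse geodesic $\sigma_n$ in $Y$ from $f(e)$ to $f(\gamma(n))$. Since $Y$ is proper, an Arzel\`a--Ascoli argument yields a subsequential limit geodesic ray $\sigma$ from $f(e)$ which is again $N_0$-Morse. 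By Lemma \ref{lem:hausdorff}, each $\sigma_n$ is $2N_0(K,C)$-Hausdorff close to the subsegment of $f\circ\gamma$ with the same endpoints, so $\sigma$ is uniformly Hausdorff close to $f\circ\gamma$; in particular any two such subsequential limits are asymptotic, making $\partial f(\xi):=[\sigma]$ well-defined.

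Next I would verify the Morse-preserving condition. Observe that the $N_0$-Morseness of the $\sigma_n$'s is inherited by $\sigma$, so $\partial f$ sends the entire $\partial_M^N X_e$ stratum into the single stratum $\partial_M^{N_0} Y_{f(e)}$, independent of $N$. For injectivity on $\partial_M^N X_e$, suppose $\xi\neq\xi'$ are represented by $N$-Morse rays $\gamma,\gamma'$. By Proposition \ref{prop:hypsubsets} the subspace $X^{(N)}_e$ is hyperbolic, so the Gromov product $(\gamma(t)\cdot\gamma'(t))_e$ stays bounded and hence $d(\gamma(t),\gamma'(t))\to\infty$. The quasi-isometric embedding inequality forces $d(f\gamma(t),f\gamma'(t))\to\infty$, and since the limit rays $\sigma,\sigma'$ lie uniformly Hausdorff close to $f\gamma,f\gamma'$ respectively, also $d(\sigma(t),\sigma'(t))\to\infty$, so $[\sigma]\neq[\sigma']$ in $\partial_M Y_{f(e)}$. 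This establishes the Morse-preserving property, and Proposition \ref{prop:morse preserving} then delivers the homeomorphism-onto-image conclusion.

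For the $G$-equivariance addendum, assume $f$ is the orbit map $g\mapsto g\cdot y_0$ for some $y_0\in Y$ of a $G$-action by isometries; then $f(gh)=g\cdot f(h)$. Since $G$ acts on $Y$ by isometries, each $g\in G$ induces a Morse-gauge-preserving self-homeomorphism of $\partial_M Y$ (and these actions intertwine correctly with the basepoint-change homeomorphisms between $\partial_M Y_{y_0}$ and $\partial_M Y_{g\cdot y_0}$ coming from \cite{cordes2016stability}). The construction above of $\partial f(\xi)$ as a limit of Morse segments from $f(e)$ to $f(\gamma(n))$ is then visibly natural with respect to the $G$-action on the Cayley graph and on $Y$, giving $\partial f(g\cdot\xi)=g\cdot\partial f(\xi)$.

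The only step that requires genuine care is the well-definedness of the limit $[\sigma]$, but this is handled uniformly by Lemma \ref{lem:hausdorff} since any two Arzel\`a--Ascoli limits stay within $4N_0(K,C)$ of one another and thus define the same asymptotic class in $\partial_M^{N_0} Y_{f(e)}$; everything else is bookkeeping.
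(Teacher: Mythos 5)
Your proposal is correct and follows the same route the paper takes: it observes that a stable embedding sends geodesics to uniformly Morse quasigeodesics, deduces that $\partial f$ is Morse-preserving (which the paper asserts without detail), and then invokes Proposition \ref{prop:morse preserving}. You have simply filled in the details the paper leaves implicit (the Arzel\`a--Ascoli construction of $\partial f$, well-definedness via Lemma \ref{lem:hausdorff}, and injectivity on each stratum via the Gromov product in $X^{(N)}_e$), all of which check out.
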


\section{The action of $G$ on $\partial_M X$} \label{sec:G action}

In this section, we begin a study of the dynamics of the action of $G$ on $\partial_M X$.

\subsection{Definition of the $G$-action}

For the rest of this section, fix $G$ a group acting by isometries on a proper geodesic metric space $X$ and a base point $e \in X$.

\begin{lemma} \label{lem:G-action well-defined}
Given any Morse function $N$ and $g \in G$, there exists a Morse function $N'$ depending only on $N$ and $g$ such that if $(x_n), (y_n) \subset X^{(N)}_e$ are asymptotic, then:
\begin{enumerate}
\item $(g \cdot x_n), (g \cdot y_n) \subset X^{(N')}_e$ and
\item $(g \cdot y_n) \in X^{(N')}_e$ is asymptotic to $(g \cdot x_n) \in X^{(N')}_e$.
\end{enumerate}
\end{lemma}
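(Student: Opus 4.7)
The plan is to transport the $N$-Morse property from $[e, x_n]$ across the bounded displacement $D := d(e, g \cdot e)$ induced by changing basepoints between $e$ and $g \cdot e$. Since $g$ acts by isometries and $D$ is the only source of distortion, the resulting Morse gauge $N'$ will depend only on $N$ and $g$.

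For part (1), note first that $g \cdot [e, x_n]$ is an $N$-Morse geodesic from $g \cdot e$ to $g \cdot x_n$ because $g$ is an isometry. The concatenation of $[g \cdot e, e]$ with $[e, g \cdot x_n]$ is a $(1, 2D)$-quasigeodesic from $g \cdot e$ to $g \cdot x_n$ (a routine triangle-inequality check), so Lemma~\ref{lem:hausdorff} forces this concatenation to lie within Hausdorff distance $2N(1, 2D)$ of $g \cdot [e, x_n]$. Since every point of the prepended piece $[g \cdot e, e]$ is automatically within $D$ of the endpoint $e \in [e, g \cdot x_n]$, we obtain
$$d_{Haus}\bigl([e, g \cdot x_n],\, g \cdot [e, x_n]\bigr) \;\leq\; 2N(1, 2D) + D \;=:\; M.$$
Now let $q$ be any $(K, C)$-quasigeodesic with endpoints $p_1, p_2$ on $[e, g \cdot x_n]$, and choose $p_1', p_2' \in g \cdot [e, x_n]$ with $d(p_i, p_i') \leq M$. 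Prepending and appending geodesic segments of length at most $M$ to $q$ yields a $(K, C + 4M)$-quasigeodesic $\tilde q$ from $p_1'$ to $p_2'$ (another direct triangle-inequality bookkeeping using $K \geq 1$). By the $N$-Morse property of $g \cdot [e, x_n]$, we get $\tilde q \subset \mathcal N_{N(K, C + 4M)}(g \cdot [e, x_n])$, and combining with the Hausdorff bound gives $q \subset \mathcal N_{N'(K, C)}([e, g \cdot x_n])$ for $N'(K, C) := N(K, C + 4M) + M$. This gauge depends only on $N$ and $D$, hence only on $N$ and $g$.

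For part (2), for any $u, v \in X$,
$$|(gu \cdot gv)_e - (u \cdot v)_e| \leq 2D,$$
because $d(gu, gv) = d(u, v)$ and $|d(e, gz) - d(e, z)| \leq D$ for every $z \in X$ by the triangle inequality. Hence $(x_n \cdot y_n)_e \to \infty$ forces $(g \cdot x_n \cdot g \cdot y_n)_e \to \infty$, so the translated sequences are asymptotic in $X^{(N')}_e$.

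The only real obstacle is the bookkeeping in part (1): verifying that the quasigeodesic constants for $\tilde q$ genuinely depend only on $N$ and $D$, not on $n$. Conceptually nothing beyond the definition of $N$-Morse and the isometric action of $g$ is needed, and no hyperbolicity of $X$ is invoked.
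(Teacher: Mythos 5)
Your proof is correct and follows essentially the same route as the paper: both arguments transport the Morse gauge and the Gromov product across the bounded basepoint displacement $d(e, g\cdot e)$, the only difference being that the paper outsources these change-of-basepoint estimates to Proposition 3.15 of \cite{cordes2016stability} while you carry them out explicitly via Lemma~\ref{lem:hausdorff}. The constants you obtain (e.g.\ $C+4M$ and the $2D$ bound on the Gromov product, where $D$ suffices) are not sharp, but they depend only on $N$ and $g$, which is all the statement requires.
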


\begin{proof}
Since $G$ is acting by isometries we see that $g$ and $g\cdot x_i$ are connected by a $N$-Morse geodesic. As in the proof of {Proposition 3.15} in \cite{cordes2016stability}  we show that $X^{(N)}_{g\cdot e} \subset X^{(N')}_e$ for some $N'$ which depends only on $N$ and $d(g\cdot e, e)$. 

To prove the second part, we note that in the proof of {Proposition 3.15} in \cite{cordes2016stability}  we also get that there exists a constant $D$ such that for all $x,y \in \partial X^{(N)}_{g\cdot e}$, $$(x \cdot_{N'} y)_e -D \leq (x \cdot_N y)_{g \cdot e} \leq (x \cdot_{N'} y)_e +D.$$

Since $G$ acts by isometries and $(x_n), (y_n) \subset X^{(N)}_e$ are asymptotic, it follows that $(g\cdot x_n), ( g \cdot y_n) \subset X^{(N)}_{g\cdot e}$ are asymptotic, which forces $(g\cdot x_n), ( g \cdot y_n) \subset X^{(N)}_{e}$ to be asymptotic. \end{proof}

We may naturally extend the action of $G$ on $X$ to an action on the whole Morse boundary, $\partial_M X$, as follows:  Suppose $(x_n) \subset X^{(N)}_e$ is a sequence $N$-converging to a point $\lambda \in \partial X^{(N)}_e$.  For any $g \in G$, we define $g \cdot \lambda$ to be the asymptotic class of $(g \cdot x_n) \in \partial X^{(N')}_e$, where $N'$ is the Morse gauge from Lemma \ref{lem:G-action well-defined}.

\subsection{Definition of the limit set}
    
\begin{definition}[$\Lambda(G)$] \label{defn:limit set}
The \emph{limit set} of the $G$-action on $\partial_M X$ is
$$\Lambda_e(G)= \setcon{\lambda \in \partial_M X}{\exists N \text{ and } (g_k) \subset G  \text{ such that } (g_k \cdot e)\subset X^{(N)}_e \text{ and } \lim g_k\cdot e=\lambda}$$
\end{definition}

The following lemma is an immediate consequence of the definitions and Lemma \ref{lem:G-action well-defined}:

\begin{lemma} 
For any $e, f \in X$ and corresponding natural change of basepoint homeomorphism $\phi_{e,f}: \Lambda_e(G) \rightarrow \Lambda_f(G)$, we have $\phi_{e,f}(\Lambda_e(G)) = \Lambda_f(G)$.  Moreover, we have $G \cdot \Lambda_e(G) \subset \Lambda_e(G)$, i.e., $\Lambda_e(G)$ is $G$-invariant. 
\end{lemma}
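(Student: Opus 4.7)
The plan is to verify the two assertions independently, both reducing to the change-of-basepoint machinery that is already packaged into Lemma \ref{lem:G-action well-defined}.

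First I would dispatch $G$-invariance, which I expect to be essentially immediate from the very definition of the $G$-action on $\partial_M X$ given immediately before Definition \ref{defn:limit set}. Take $\lambda \in \Lambda_e(G)$, say represented by $(g_k \cdot e) \subset X^{(N)}_e$ with $\lim g_k \cdot e = \lambda$. For any $g \in G$, Lemma \ref{lem:G-action well-defined} gives a Morse gauge $N'$, depending only on $N$ and $g$, such that $(g \cdot g_k \cdot e) \subset X^{(N')}_e$ and $\lim (g g_k) \cdot e = g \cdot \lambda$. Since $\{g g_k\} \subset G$, Definition \ref{defn:limit set} immediately places $g \cdot \lambda$ in $\Lambda_e(G)$.

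Next I would handle the change of basepoint. Given $\lambda \in \Lambda_e(G)$ with representative sequence $(g_k \cdot e) \subset X^{(N)}_e$, I would consider the parallel sequence $(g_k \cdot f)$. Because $G$ acts by isometries, $d(g_k \cdot e, g_k \cdot f) = d(e,f)$ is constant, so $(g_k \cdot e)$ and $(g_k \cdot f)$ are asymptotic in $\partial_M X$ and in particular $\phi_{e,f}(\lambda) = \lim g_k \cdot f$. The content is then to produce a Morse gauge $N''$, depending only on $N$ and $d(e,f)$, with $(g_k \cdot f) \subset X^{(N'')}_f$; this is exactly the change-of-basepoint control from the proof of Lemma \ref{lem:G-action well-defined}, which in turn is an application of Proposition 3.15 of \cite{cordes2016stability}. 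Once this is in hand, Definition \ref{defn:limit set} (with basepoint $f$) yields $\phi_{e,f}(\lambda) \in \Lambda_f(G)$, so $\phi_{e,f}(\Lambda_e(G)) \subset \Lambda_f(G)$. Swapping the roles of $e$ and $f$ gives the reverse inclusion via $\phi_{f,e} = \phi_{e,f}^{-1}$, and equality follows.

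I do not anticipate a real obstacle; the lemma is a bookkeeping consequence of previously recorded results. The only point that warrants care is verifying that the Morse gauge bound from \cite{cordes2016stability} depends only on $N$ and $d(e,f)$ and is in particular uniform in $k$, so that the whole sequence $(g_k \cdot f)$ lives in a single stratum $X^{(N'')}_f$ rather than in a stratum that degenerates with $k$. This uniformity is exactly what the proof of Lemma \ref{lem:G-action well-defined} already establishes, so the argument can cite that lemma directly and needs no additional machinery.
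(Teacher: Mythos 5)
Your argument is correct and is exactly the route the paper intends: the paper offers no written proof, declaring the lemma an immediate consequence of the definitions and Lemma \ref{lem:G-action well-defined}, and your write-up supplies precisely that bookkeeping ($G$-invariance from the definition of the action together with Lemma \ref{lem:G-action well-defined}, and the basepoint change from the uniform gauge control of Proposition 3.15 of \cite{cordes2016stability} applied to the boundedly separated sequences $(g_k\cdot e)$ and $(g_k\cdot f)$). No gaps; your closing caveat about uniformity of the gauge in $k$ is the right thing to check and is indeed covered by the cited control.
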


\subsection{Limit geodesics and their properties}

In Subsection \ref{subsec:weak hull}, we will define the weak hull of $\Lambda_e(G)$ as all geodesics connecting points in $\Lambda_e(G)$.  For now, however, we will focus our attention on a special class of visual geodesics arising naturally as limits of finite geodesics whose endpoint sequences converge to the Morse boundary.  These geodesics are easier to work with and more obviously tied to the geometry of $G$.



Let $(x_n), (y_n) \in X^{(N)}_e$ be sequences asymptotic to $\lambda^-, \lambda^+ \in \partial X^{(N)}_e$ respectively, with $\lambda^- \neq \lambda^+$.  Let $\gamma_{x,n} = [e, x_n]$, $\gamma_{y,n} = [e,y_n]$ be $N$-Morse geodesics.  Since $X$ is proper, the Arzel\`a-Ascoli theorem implies that there exist geodesic rays $\gamma_x, \gamma_y$ such that $\gamma_{x,n}, \gamma_{y,n}$ subsequentially converge uniformly on compact sets to $\gamma_x, \gamma_y$. Let $\gamma_n$ be a geodesic joining $\gamma_x(n)$ and $\gamma_y(n)$ for each $n \in \mathbb{N}$.

In fact, one can prove that $(\gamma_n)$ has a Morse subsequential limit:

\begin{lemma}\label{lem:limit geodesics are Morse}
Using the above notation, there exists a Morse gauge $N'$ and a biinfinite $N'$-Morse geodesic $\gamma: (-\infty, \infty) \rightarrow X$ such that $\gamma_n$ subsequentially converges uniformly on compact sets to $\gamma$.   Moreover, we have that $\gamma \subset \mathcal N_{4N(3,0)}(\gamma_x \cup \gamma_y)$.
\end{lemma}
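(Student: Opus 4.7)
The plan is to execute the standard Arzel\`a-Ascoli limit argument, after establishing two key ingredients: (i) the geodesics $\gamma_n$ are uniformly Morse, and (ii) they pass through a uniformly bounded neighborhood of $e$ so that the subsequential limit is a biinfinite geodesic. For (i), the limit rays $\gamma_x, \gamma_y$ are $N$-Morse (up to an arbitrarily small adjustment of the gauge) by a standard perturbation argument: any $(K,C)$-quasigeodesic with endpoints on $\gamma_x$ can be modified by vanishing endpoint perturbations to a $(K, C + o(1))$-quasigeodesic with endpoints on $\gamma_{x,n}$ for large $n$. Applying Lemma \ref{lem:deltaslim} to the triangle with vertices $e, \gamma_x(n), \gamma_y(n)$ and sides $\gamma_x\vert_{[0,n]}, \gamma_y\vert_{[0,n]}, \gamma_n$ then yields both a uniform Morse gauge $N'$ for every $\gamma_n$ and the containment $\gamma_n \subset \mathcal{N}_{4N(3,0)}(\gamma_x \cup \gamma_y)$; the latter will give the ``moreover'' statement of the lemma once the subsequential limit is extracted.

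For (ii), observe first that $X^{(N)}_e$ is $8N(3,0)$-hyperbolic by Proposition \ref{prop:hypsubsets}. Since $\lambda^- \neq \lambda^+$ in $\partial X^{(N)}_e$, a standard four-point-inequality argument combined with the asymptotic conditions $(x_n \cdot x_m)_e, (y_n \cdot y_m)_e \to \infty$ forces $\sup_n (x_n \cdot y_n)_e < \infty$. To translate this Gromov-product bound into a bound on $d(e, \gamma_n)$, use $4N(3,0)$-slimness of the triangle to locate a ``transition point'' $\gamma_x(s^*)$ which is simultaneously within $4N(3,0)$ of some $\gamma_y(s')$ (with $|s^* - s'| \leq 4N(3,0)$ by the triangle inequality) and of some point on $\gamma_n$. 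Bounding $d(x_n, y_n)$ by the three-step path $x_n \to \gamma_x(s^*) \to \gamma_y(s') \to y_n$ gives $s^* \leq (x_n \cdot y_n)_e + O(N(3,0))$, and hence $d(e, \gamma_n) \leq s^* + 4N(3,0) \leq (x_n \cdot y_n)_e + O(N(3,0))$.

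Reparametrize each $\gamma_n : [a_n, b_n] \to X$ so that $\gamma_n(0)$ is the closest point to $e$. Since $d(e, x_n), d(e, y_n) \to \infty$ while $\sup_n d(e, \gamma_n(0))$ is bounded, the domains expand to $(-\infty, \infty)$ and $\gamma_n(0) \in \overline{B(e, R)}$ for a fixed $R$. Properness of $X$ combined with Arzel\`a-Ascoli then produces a subsequential biinfinite geodesic limit $\gamma : \R \to X$, and the uniform Morse gauge $N'$ transfers to $\gamma$ by the same perturbation argument used for $\gamma_x, \gamma_y$.

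The main obstacle is the Gromov-product-to-distance translation in step (ii): $X$ itself is not assumed hyperbolic, so the familiar $\delta$-hyperbolic estimate $d(e, [x,y]) \asymp (x \cdot y)_e$ is unavailable directly in $X$, and it must instead be extracted from the slim-triangle structure provided by Lemma \ref{lem:deltaslim}.
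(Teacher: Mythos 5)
Your proposal is correct and follows the same skeleton as the paper's proof: apply Lemma \ref{lem:deltaslim} to the triangles $\gamma_x([0,n])\cup\gamma_n\cup\gamma_y([0,n])$ to get uniform $4N(3,0)$-slimness and a uniform Morse gauge for the $\gamma_n$, pin the $\gamma_n$ to a fixed compact set, extract a subsequential limit by Arzel\`a--Ascoli, and pass the neighborhood containment and the Morse gauge to the limit. The only genuine divergence is in the localization step, the one you call the main obstacle. You route it through the hyperbolicity of $X^{(N)}_e$ (Proposition \ref{prop:hypsubsets}): bound $\sup_n(\gamma_x(n)\cdot\gamma_y(n))_e$ using $\lambda^-\neq\lambda^+$ and the four-point inequality, then convert that into a bound on $d(e,\gamma_n)$ via a transition point on the slim triangle. (A small slip: the product you actually need is $(\gamma_x(n)\cdot\gamma_y(n))_e$ rather than $(x_n\cdot y_n)_e$, since the triangle's vertices lie on the limit legs; this is harmless because $(\gamma_x(n))$ and $(\gamma_y(n))$ also converge to $\lambda^-$ and $\lambda^+$.) The paper avoids Gromov products entirely: since $\lambda^-\neq\lambda^+$ one can choose $R$ with $d(\gamma_x(R),\gamma_y(t))>4N(3,0)$ for all $t$, and $4N(3,0)$-slimness then forces every $\gamma_n$ with $n>R$ to meet the ball of radius $4N(3,0)$ about $\gamma_x(R)$. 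Both arguments are valid; the paper's is shorter and never invokes the hyperbolicity of the stratum, while yours makes the quantitative relation between $d(e,\gamma_n)$ and the Gromov product of the endpoints explicit, which is more than the lemma requires.
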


\begin{proof}
Let $R \in \mathbb{N}$ be so that $d(\gamma_x(R), \gamma_y(t))>4N(3,0)$ for all $t \in [0, \infty)$. For each natural number $n>R$, let $\gamma_n$ be the geodesic between $\gamma_x(n)$ and $\gamma_y(n)$. 

By Lemma \ref{lem:deltaslim}, we know that the triangle $\gamma_x([0,n]) \cup \gamma_n \cup \gamma_y([0,n])$ is $4N(3,0)$-slim.  From the choice of $R$, it follows that each $\gamma_n$ must intersect a compact ball of radius $4N(3,0)$ centered at $\gamma_x(R)$. By Arzel\'a--Ascoli, there is a subsequence of $\{\gamma_n\}$ which converges to a biinfinite geodesic $\gamma$. Since every $\gamma_n$ is in the $4N(3,0)$ neighborhood of $\gamma_x \cup \gamma_y$, it follows that $\gamma$ must be as well.  From this, a standard argument implies that $\gamma((-\infty, 0]),\gamma([0,\infty))$ are a bounded Hausdorff distance from $\gamma_x, \gamma_y$, respectively, where that bound depends on $N$. 

Finally, it follows from the moreover statement of Lemma \ref{lem:deltaslim} that $\gamma$ is $N'$-Morse, where $N'$ depends on $N$ because each $\gamma_n$ is $N'$-Morse.
\end{proof}

\begin{definition}[Limit geodesics and triangles]
Given $(x_n), (y_n) \in X^{(N)}_e$ and $\lambda^-, \lambda^+ \in \partial X^{(N)}_e$ as above, we call $\gamma_x, \gamma_y$ as described above a \emph{limit legs}, and $\gamma$ a \emph{limit geodesic} for $\lambda^-, \lambda^+$.  We call the triangle formed by $\gamma_x \cup \gamma_y \cup \gamma$ a \emph{limit triangle} based at $e$.
\end{definition}

The next goal is to prove the following two propositions:

\begin{proposition}[Limit triangles are slim]\label{prop:limit triangles are thin}
For any Morse gauge $N$, if $(x_n), (y_n) \subset X^{(N)}_e$ are asymptotic to $\lambda^- \neq \lambda^+ \in \partial X^{(N)}_e$, then any limit triangle is $4N(3,0)$-slim.
\end{proposition}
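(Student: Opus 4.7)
The plan is to realize the biinfinite limit triangle $\gamma_x \cup \gamma \cup \gamma_y$ as a subsequential limit of finite approximating triangles, each of which is $4N(3,0)$-slim by direct invocation of Lemma \ref{lem:deltaslim}, and then pass to the limit using properness of $X$.

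First I would observe (as is already used implicitly in the proof of Lemma \ref{lem:limit geodesics are Morse}) that the limit rays $\gamma_x, \gamma_y$ are themselves $N$-Morse, since the Morse condition is closed under compact-open limits of geodesics: any $(K,C)$-quasigeodesic $q$ with endpoints on $\gamma_x$ can be perturbed by short geodesic segments to have endpoints on the $N$-Morse approximant $\gamma_{x,n}$, hence stays in $\mathcal{N}_{N(K,C)}(\gamma_{x,n})$, and passing to the limit yields $q \subset \mathcal{N}_{N(K,C)}(\gamma_x)$. With this in hand, Lemma \ref{lem:deltaslim} applies directly with $\gamma_1 = \gamma_x$, $\gamma_2 = \gamma_y$, $v_1 = v_2 = n$, and third side $\gamma_n$ (the geodesic between $\gamma_x(n)$ and $\gamma_y(n)$ whose subsequential limit defines $\gamma$), producing a $4N(3,0)$-slim finite triangle $T_n := \gamma_x([0,n]) \cup \gamma_n \cup \gamma_y([0,n])$ for every $n$.

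To pass to the limit, I take a point $p$ on one side of the limit triangle and split into three cases. If $p = \gamma_x(s) \in \gamma_x$, then $p \in T_n$ for all $n > s$, slimness produces witnesses $q_n \in \gamma_n \cup \gamma_y([0,n])$ with $d(p,q_n) \leq 4N(3,0)$, and properness of $X$ extracts a convergent subsequence $q_n \to q$ with $d(p,q)\leq 4N(3,0)$; after refining to ensure the $q_n$ stay on a consistent side, uniform convergence of $\gamma_n \to \gamma$ on compact sets places $q \in \gamma \cup \gamma_y$. The case $p \in \gamma_y$ is symmetric. If $p = \gamma(t) \in \gamma$, I approximate $p$ by $\gamma_n(t) \in \gamma_n \subset T_n$ and repeat the same extraction, obtaining a witness $q \in \gamma_x \cup \gamma_y$ within $4N(3,0)$ of $p$.

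The main technical point is really the closedness of the $N$-Morse property under compact-open limits of geodesic rays, which the authors take for granted; once that is accepted, the rest is a routine compactness argument in a proper geodesic space, with the only delicate bookkeeping being the extraction of a subsequence to ensure the witnesses $q_n$ settle on a single side ($\gamma_n$ or $\gamma_y([0,n])$) so that their limit can be identified as lying in $\gamma \cup \gamma_y$.
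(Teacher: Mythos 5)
Your proof is correct and follows essentially the same route as the paper: both arguments approximate the limit triangle by the finite triangles $\gamma_x([0,n])\cup\gamma_n\cup\gamma_y([0,n])$, invoke Lemma \ref{lem:deltaslim} for their $4N(3,0)$-slimness, and pass to the limit using properness and subsequential uniform convergence of $\gamma_n$ to $\gamma$ on compact sets. The only cosmetic differences are that the paper disposes of points on $\gamma$ by citing the ``moreover'' part of Lemma \ref{lem:limit geodesics are Morse} rather than rerunning the limiting argument for that side, and that you make explicit the (standard, and implicitly assumed in the paper) fact that the limit legs $\gamma_x,\gamma_y$ are themselves $N$-Morse.
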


\begin{proposition}[Limit geodesics are asymptotic]\label{prop:limit geodesics are asymptotic}
For any Morse gauge $N$, there exists a constant $K'>0$ such that if $\gamma, \gamma'$ are limit geodesics with the same endpoints $\lambda^+, \lambda^- \in \partial X^{(N)}_e$, then 
$$d_{Haus}(\gamma, \gamma')<K'.$$
\end{proposition}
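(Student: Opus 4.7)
The strategy is to combine two fellow-traveling properties for $N$-Morse data, then chain them together. By Lemma \ref{lem:limit geodesics are Morse}, both $\gamma$ and $\gamma'$ are $N'$-Morse for a common Morse gauge $N' = N'(N)$, so it suffices to bound their Hausdorff distance in terms of $N$.

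First I will establish that any two $N$-Morse geodesic rays $\alpha, \beta$ based at $e$ which represent the same point $\lambda \in \partial X^{(N)}_e$ satisfy $d_{Haus}(\alpha, \beta) \leq K_0$ for some $K_0 = K_0(N)$. Given $T \geq 0$, pick $m, m'$ with $(\alpha(m) \cdot \beta(m'))_e > T + 8N(3,0)$, which is possible because the defining sequences of $\alpha, \beta$ are asymptotic in $\partial X^{(N)}_e$. Lemma \ref{lem:deltaslim} makes the $N$-Morse triangle $\{e, \alpha(m), \beta(m')\}$ $4N(3,0)$-slim; a standard Gromov-product estimate then forces $\alpha(T)$ to be within $O(N(3,0))$ of a point on $\beta$ (not on the third side, which the Gromov-product lower bound excludes). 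Applying this to the limit-leg pairs at $\lambda^-$ and $\lambda^+$ gives $d_{Haus}(\gamma_x, \gamma_x'), d_{Haus}(\gamma_y, \gamma_y') \leq K_0$.

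Second, I will show that the two ends of each limit geodesic track its own legs with a constant depending only on $N$. The approximating geodesics $\gamma_n = [\gamma_x(n), \gamma_y(n)]$ form one side of the $N$-Morse triangle $\{e, \gamma_x(n), \gamma_y(n)\}$. Since $\lambda^- \neq \lambda^+$, the Gromov products $(\gamma_x(n) \cdot \gamma_y(n))_e$ remain bounded as $n \to \infty$, converging to $(\lambda^- \cdot \lambda^+)_e$. Slim-triangle reasoning decomposes $\gamma_n$ into a bounded central arc near the Gromov center and two tails that $O(N(3,0))$-fellow-travel $\gamma_x\vert_{[R, n]}$ and $\gamma_y\vert_{[R, n]}$ respectively, for some $R$ controlled by $(\lambda^- \cdot \lambda^+)_e$. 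Arzel\`a--Ascoli convergence preserves the fellow-traveling on the tails, so outside a bounded core region the curves $\gamma$ and $\gamma_x \cup \gamma_y$ are within $O(N(3,0))$ of each other.

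Chaining the two steps, outside a common bounded core the curves $\gamma$ and $\gamma'$ are within $O(N(3,0) + K_0)$ of each other via the comparison $\gamma \leftrightarrow \gamma_x \cup \gamma_y \leftrightarrow \gamma_x' \cup \gamma_y' \leftrightarrow \gamma'$. To bound the Hausdorff distance inside the core, note that steps 1 and 2 place the ``exit points'' of $\gamma$ and $\gamma'$ onto the shared legs within $O(N(3,0) + K_0)$ of each other. Concatenating the central arc of $\gamma'$ with short bridges of length $O(N(3,0) + K_0)$ to the matching exit points of $\gamma$ yields a $(1, O(N(3,0) + K_0))$-quasigeodesic with endpoints on $\gamma$; the $N'$-Morse property of $\gamma$ then forces this quasigeodesic into an $N'(1, O(N(3,0)+K_0))$-neighborhood of $\gamma$, which controls the Hausdorff distance in the core. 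The total bound $K' = K'(N)$ depends only on $N$. The principal obstacle is precisely this last step: the central arcs can have length proportional to $(\lambda^- \cdot \lambda^+)_e$, so one cannot simply bound them by the diameter of a basepoint ball; the uniform bound must instead be extracted from the Morse property of the limit geodesics applied to bridging quasigeodesics.
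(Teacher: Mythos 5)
Your argument is essentially the paper's: Proposition~\ref{prop:limit geodesics are asymptotic} is deduced there by combining Proposition~\ref{prop:limit triangles are thin} (each limit geodesic fellow-travels its legs) with Lemma~\ref{lem:limit rays are asymptotic} ($N$-Morse rays from $e$ with the same endpoint are $14N(3,0)$-Hausdorff close), exactly the two ingredients you establish in your first two steps before chaining $\gamma \leftrightarrow \gamma_x\cup\gamma_y \leftrightarrow \gamma_x'\cup\gamma_y' \leftrightarrow \gamma'$. The paper treats the chaining as immediate, whereas you additionally supply the bridging-quasigeodesic argument for the central region near $e$; that detail is correct and fills in a step the paper leaves to the reader.
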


Proposition \ref{prop:limit geodesics are asymptotic} is an immediate consequence of Proposition \ref{prop:limit triangles are thin} and following lemma, the proof of which appears in the proof of Theorem 2.8 of \cite{cordes2016stability}:

\begin{lemma}[Limit geodesic rays are asymptotic]\label{lem:limit rays are asymptotic}
For any Morse gauge $N$ and $\lambda\in \partial X^{(N)}_e$, if $\gamma$ and $\gamma'$ are $N$-Morse geodesic rays with endpoint $\lambda$, then $d_{Haus}(\gamma, \gamma')<14N(3,0)$.
\end{lemma}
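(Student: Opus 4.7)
The plan is to reduce the statement to a standard thin-triangles argument for asymptotic rays in Gromov hyperbolic spaces, using Lemma \ref{lem:deltaslim} to control slimness via the Morse gauge. Fix an $N$-Morse pair of rays $\gamma, \gamma'$ based at $e$ with $\lim \gamma = \lim \gamma' = \lambda \in \partial X^{(N)}_e$. By symmetry it is enough to show that every $p = \gamma(t)$ lies within distance $\leq 14N(3,0)$ of $\gamma'$.

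First I would exploit the convergence at infinity to get large Gromov products. Since both $(\gamma(n))$ and $(\gamma'(n))$ represent $\lambda \in \partial X^{(N)}_e$, and $X^{(N)}_e$ is $8N(3,0)$-hyperbolic by Proposition \ref{prop:hypsubsets}, we have $(\gamma(s) \cdot \gamma'(s))_e \to \infty$ as $s \to \infty$. So for any given $t$, I can select $s > t$ large enough that $(\gamma(s) \cdot \gamma'(s))_e$ exceeds $t$ by whatever slack is needed (a small multiple of $N(3,0)$). Then I form the geodesic triangle $T$ with sides $\gamma|_{[0,s]}$, $\gamma'|_{[0,s]}$, and any geodesic $\sigma$ joining $\gamma(s)$ to $\gamma'(s)$. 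Lemma \ref{lem:deltaslim} applies directly: since the first two sides are $N$-Morse and start at the common basepoint $e$, the triangle $T$ is $4N(3,0)$-slim.

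Now the slimness places $p$ within $4N(3,0)$ of $\gamma'|_{[0,s]} \cup \sigma$, and the Gromov product estimate excludes the $\sigma$ alternative. Indeed, every point $q \in \sigma$ satisfies the elementary bound $d(e,q) \geq (\gamma(s) \cdot \gamma'(s))_e$ (from the triangle inequality against either endpoint of $\sigma$), so if $p$ were within $4N(3,0)$ of $\sigma$ we would have $t = d(e,p) \geq (\gamma(s) \cdot \gamma'(s))_e - 4N(3,0)$, contradicting our choice of $s$. Hence $p$ is within $4N(3,0)$ of $\gamma'|_{[0,s]}$, in particular within $4N(3,0)$ of $\gamma'$. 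Running the same argument with $\gamma, \gamma'$ swapped yields the Hausdorff bound; the explicit constant $14N(3,0)$ accumulates from loose but uniform accounting of the slack (using $8N(3,0)$ as an upper bound on the hyperbolicity constant of $X^{(N)}_e$, plus the $4N(3,0)$ slimness).

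The one place that deserves care is justifying $(\gamma(s) \cdot \gamma'(s))_e \to \infty$ purely in terms of the basepoint $e$ in $X$ rather than in the possibly non-geodesic subspace $X^{(N)}_e$; this is immediate because the Gromov product is defined from the ambient distances $d$ on $X$, which agree with those on $X^{(N)}_e$, and convergence to the same point in $\partial X^{(N)}_e$ is defined precisely by this Gromov product going to infinity. Beyond that, the argument is a direct Morse analogue of the classical fellow-traveling lemma for asymptotic rays in a hyperbolic space, with Lemma \ref{lem:deltaslim} playing the role of the $\delta$-slim triangles axiom.
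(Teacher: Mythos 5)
Your argument is correct and is essentially the standard one: the paper itself does not prove this lemma but defers to the proof of Theorem 2.8 in \cite{cordes2016stability}, which runs the same way --- slim finite triangles from Lemma \ref{lem:deltaslim} on the two $N$-Morse sides based at $e$, plus divergence of the Gromov products $(\gamma(s)\cdot\gamma'(s))_e$ to rule out the third side. Your bookkeeping in fact yields the sharper bound $d_{Haus}(\gamma,\gamma')\leq 4N(3,0)$, which of course implies the stated $14N(3,0)$.
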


\begin{proof}[Proof of Proposition \ref{prop:limit triangles are thin}]
By Lemma \ref{lem:limit geodesics are Morse}, $\gamma \subset  \mathcal N_{4N(3,0)}(\gamma_x \cup \gamma_y)$.  Let $w \in \gamma_y$ and assume that $w$ is not within $4N(3,0)$ of a point in $\gamma_x$.  It suffices to find a point on $\gamma$ within $4N(3,0)$ of $w$.  (A similar argument works when $y$ is not within $4N(3,0)$ of $\gamma_y$.)

Let $z \in \gamma$ be the closest point on $\gamma$ to $w$.  By Lemma \ref{lem:deltaslim}, each $\gamma_x([0,n]) \cup \gamma_n \cup \gamma_y[(0,n)]$ is $4N(3,0)$-slim.  Since the $\gamma_n$ subsequentially converges on compact sets to $\gamma$, it follows that for all $\epsilon \geq 0$, we have $d(w, z)\leq4N(3,0)+ \epsilon$. Thus $d(w, z)\leq4N(3,0)$ and the proposition follows from a symmetric argument with $w \in \gamma_x$. 
%
%
%
\end{proof}

\subsection{Asymptoticity}

In this subsection, we study the behavior of geodesics and geodesic rays asymptotic to points in $\partial X^{(N)}_e$.

\begin{definition}[Asymptotic rays]
Let $\lambda \in \partial X^{(N)}_e$ and let $\gamma:[0, \infty) \rightarrow X$ with $\gamma(0)=e$ be a limit leg for $\lambda$ based at $e$.  We say that a geodesic $\gamma':[0, \infty) \rightarrow X$ with $\gamma'(0)=e$ is \emph{asymptotic} to $\lambda$ if there exists $K>0$ such that
$$d_{Haus}(\gamma, \gamma') < K.$$
\end{definition}
 
The following is an immediate consequence of Corollary 2.6 in \cite{Cordes15}:

\begin{lemma}\label{lem:stayclose}
There exists $K_0>0$ and Morse gauge $N'$ depending only on $N$ such that the following holds: For any $\lambda \in \partial X^{(N)}_e$, if $\gamma, \gamma'\colon [0,\infty)\rightarrow X$ are geodesic rays with $\gamma(0)=\gamma'(0)$ which are asymptotic to $\lambda$, then $\gamma, \gamma'$ are $N'$-Morse and
$$d_{Haus}(\gamma,\gamma')<K_0.$$
\end{lemma}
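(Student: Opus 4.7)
My plan is to combine a uniform-on-compacts limit argument for the reference limit leg with a direct application of Corollary 2.6 of \cite{Cordes15}, which the paper cites as delivering the conclusion. By the definition of asymptoticity to $\lambda$, I would first produce a limit leg $\gamma_0:[0,\infty)\to X$ for $\lambda$ based at $e$ such that both $\gamma$ and $\gamma'$ lie within finite Hausdorff distance of $\gamma_0$ (with a priori different constants). Since $\gamma_0$ arises as a uniform-on-compacts limit of $N$-Morse segments $[e,x_n]$ with $(x_n)\subset X^{(N)}_e$ representing $\lambda$, a standard approximation argument shows $\gamma_0$ is itself $N$-Morse: given a $(K,C)$-quasigeodesic $q$ with endpoints on $\gamma_0$, perturb its endpoints to nearby points on $[e,x_n]$ for large $n$, apply the $N$-Morse condition there, and pass to the limit.

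Next, I would apply Corollary 2.6 of \cite{Cordes15} with reference ray $\gamma_0$ to each of $\gamma$ and $\gamma'$. That corollary supplies a Morse gauge $N'$ and a constant $K_0$, both depending only on $N$, such that any geodesic ray from $e$ that is Hausdorff-close to an $N$-Morse geodesic ray from $e$ is itself $N'$-Morse and stays within Hausdorff distance $K_0/2$ of it. Applied to $(\gamma_0,\gamma)$ and $(\gamma_0,\gamma')$, this yields that both $\gamma$ and $\gamma'$ are $N'$-Morse with $d_{Haus}(\gamma,\gamma_0),\,d_{Haus}(\gamma',\gamma_0)<K_0/2$, so $d_{Haus}(\gamma,\gamma')<K_0$ by the triangle inequality. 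Alternatively, once both rays are known to be $N'$-Morse rays from $e$ with endpoint $\lambda$, Lemma \ref{lem:limit rays are asymptotic} applied with gauge $N'$ bounds their Hausdorff distance by $14 N'(3,0)$, which also gives the conclusion after adjusting the constant.

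The main obstacle, and the content of Corollary 2.6 one must appeal to, is explaining why the output gauge $N'$ and bound $K_0$ depend only on $N$ and not on the unspecified constants from the definition of asymptoticity. The crucial ingredient is the shared basepoint $e$: any point $\gamma(t)$ within distance $K$ of some $\gamma_0(s)$ sits at the apex of a geodesic triangle $e,\gamma(t),\gamma_0(s)$ whose slimness (Lemma \ref{lem:deltaslim}) is controlled by $4N(3,0)$, independent of $K$. This absorbs the arbitrary initial Hausdorff constant into slim-triangle fellow-traveling and prevents it from propagating into either the final Morse gauge $N'$ or the final Hausdorff bound $K_0$.
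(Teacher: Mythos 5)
Your proposal is correct and matches the paper's treatment: the paper derives Lemma \ref{lem:stayclose} directly as an immediate consequence of Corollary 2.6 in \cite{Cordes15}, which is exactly the key ingredient you invoke, and your preliminary steps (the limit leg $\gamma_0$ being $N$-Morse, the triangle-inequality combination of the two applications) are the routine details the paper leaves implicit. Your closing remark about why the output constants depend only on $N$ and not on the asymptoticity constant correctly identifies the content of that corollary.
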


\begin{definition}[Asymptotic, bi-asymptotic]
Let $\gamma:(-\infty, \infty)\rightarrow X$ be a biinfinite geodesic in $X$ with $\gamma(0)$ a closest point to $e$ along $\gamma$.  Let $\lambda \in \partial X^{(N)}_e$.  We say $\gamma$ is \emph{forward asymptotic} to $\lambda$ if for any $N$-Morse geodesic ray $\gamma_{\lambda}:[0, \infty)\rightarrow X$ with $\gamma_{\lambda}(0)=e$, there exists $K>0$ such that
$$d_{Haus}(\gamma([0, \infty), \gamma_{\lambda}([0, \infty))<K.$$
We define \emph{backwards asymptotic} similarly.  If $\gamma$ is forwards, backwards asymptotic to $\lambda, \lambda'$, respectively, then we say $\gamma$ is \emph{bi-asymptotic} to $(\lambda, \lambda')$.
\end{definition}

We note that it is an immediate consequence of Proposition \ref{prop:limit triangles are thin} and Lemma \ref{lem:stayclose} that limit geodesics are bi-asymptotic to their endpoints with a uniform asymptoticity constant.

\begin{lemma} \label{lem:third edge close}
There exists a $K_1>0$ depending only on $N$ such that the following holds: Let $\gamma_+, \gamma_-$ be $N$-Morse geodesic rays with $\gamma_+(0), \gamma_-(0)=e$, and let $\gamma$ be a bi-infinite geodesic such that both $$d_{\mathrm{Haus}}(\gamma_-([0, \infty)) , \gamma((-\infty, 0]))<K \text{ and } d_{\mathrm{Haus}}(\gamma_+([0, \infty)) , \gamma([0, \infty)))< K$$ for some constant $K>0$. Then the triangle $\gamma_- \cup \gamma \cup \gamma_+$ is $K_1$-slim. Furthermore,  there exist $S, R \in [0, \infty)$ such that the following holds:
$$d_{\mathrm{Haus}}(\gamma_-([R, \infty)), \gamma((-\infty, 0])), d_{\mathrm{Haus}}(\gamma_+([S, \infty)) , \gamma([0, \infty))),d(\gamma_-(R), \gamma(0)), d(\gamma_+(S), \gamma(0))< K_1.$$
\end{lemma}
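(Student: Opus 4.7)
The plan is to pick parameters $R, S$ on $\gamma_\pm$ that correspond to $\gamma(0)$, and then use Lemma \ref{lem:deltaslim} on a finite sub-triangle to derive slimness of the ideal triangle.

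I would begin by bounding $d(e, \gamma(0))$. Since $e = \gamma_-(0)$ lies on $\gamma_-([0,\infty))$, which is Hausdorff $K$-close to $\gamma((-\infty,0])$, some point of $\gamma$ lies within $K$ of $e$; the closest-point property of $\gamma(0)$ then gives $d(e,\gamma(0)) \leq K$. Applying the Hausdorff hypotheses to $\gamma(0)$ itself produces $R, S \geq 0$ with $d(\gamma_-(R),\gamma(0)) \leq K$ and $d(\gamma_+(S),\gamma(0)) \leq K$, which yields the distance bounds in the moreover clause.

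For slimness, I would consider the finite geodesic triangle with vertices $e, \gamma_-(R), \gamma_+(S)$: its sides $\gamma_-([0,R])$ and $\gamma_+([0,S])$ are $N$-Morse sub-segments, so Lemma \ref{lem:deltaslim} applies and this triangle is $4N(3,0)$-slim. Any third side $\sigma$ has length at most $2K$ by the triangle inequality and thus lies inside the closed $2K$-ball centered at $\gamma(0) \in \gamma$. Slimness of the infinite triangle then follows by a short case analysis: a point of $\gamma_+([S,\infty))$ is within $K$ of $\gamma([0,\infty))$ by hypothesis, whereas a point of $\gamma_+([0,S])$ lies on the finite triangle, is thus within $4N(3,0)$ of $\gamma_-([0,R]) \cup \sigma$, and so within $4N(3,0) + 2K$ of $\gamma_- \cup \gamma$. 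The cases for points on $\gamma_-$ and on $\gamma$ itself are symmetric.

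For the tail Hausdorff bounds, every point of $\gamma_-([R,\infty))$ is within $K$ of $\gamma$ by hypothesis, and a parametrization-alignment argument using the closest-point property of $\gamma(0)$ together with the triangle inequality (estimating $d(e,\gamma_-(r)) = r$ against $d(e,\gamma(s))$) forces the witness on $\gamma$ to lie in $\gamma((-\infty,0])$ after a bounded additive slack; the reverse inclusion and the analogous statements for $\gamma_+$ follow similarly. The main obstacle will be making this parametrization alignment quantitative so that the two rays $\gamma_\pm$ are not confused with each other: one must use the $N$-Morse property to ensure that $\gamma_+$ and $\gamma_-$ diverge quickly enough outside a neighborhood of $e$ that the backward half of $\gamma$ can only be paired with $\gamma_-$ and the forward half only with $\gamma_+$. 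Once this is accomplished, all bounds can be absorbed into a single uniform $K_1$ depending on $N$ (and, in the intended application where $K$ is itself controlled by $N$, depending only on $N$).
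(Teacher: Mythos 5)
There is a genuine gap here, and it is the central difficulty of the lemma rather than a technicality: every bound you produce has the form (constant depending on $N$) plus a multiple of $K$ --- you get $d(\gamma_\pm(\cdot),\gamma(0))\leq K$ for the moreover clause, slimness $4N(3,0)+2K$, and tail Hausdorff bounds of order $K$ --- whereas the statement requires $K_1$ to depend only on $N$, with $K$ an arbitrary constant appearing only in the hypothesis. Your closing remark that $K$ is ``itself controlled by $N$ in the intended application'' is not available: the lemma is invoked in Proposition \ref{prop:hull geodesics unif close} for geodesics that are merely bi-asymptotic to $(\lambda_-,\lambda_+)$, and the definition of bi-asymptotic only asserts that \emph{some} $K>0$ exists, with no quantitative control. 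If $K_1$ were allowed to depend on $K$, the downstream constant $K_2$ and the uniform stability gauge in Proposition \ref{prop:compact hyp hull} would no longer be uniform, and the main theorem's argument would not close.

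The missing idea --- and the route the paper takes --- is to use the $N$-Morse property of $\gamma_+$ itself to upgrade the $K$-dependent closeness to an $N$-dependent one. Concretely: reparametrize so that $\gamma(0)$ is closest to $e$, pick $t\geq 6K$, let $\gamma(t')$ be closest to $\gamma_+(t)$, and check that the concatenation $[e,\gamma(0)]\cup\gamma([0,t'])\cup[\gamma(t'),\gamma_+(t)]$ is a $(5,0)$-quasigeodesic; the constants $(5,0)$ are independent of $K$ precisely because $t\geq 6K$. This path has both endpoints on the $N$-Morse ray $\gamma_+$, so Lemma \ref{lem:hausdorff} places $\gamma(0)$ and the whole segment $\gamma([0,t'])$ within distance controlled by $N(5,0)$ of $\gamma_+([0,t])$. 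This produces $S$ with $d(\gamma(0),\gamma_+(S))<N(5,0)$ and, after a second application of Lemma \ref{lem:hausdorff} and letting $t'\to\infty$, the tail bound $d_{\mathrm{Haus}}(\gamma([0,\infty)),\gamma_+([S,\infty)))<2N(1,2N(5,0))+2N(5,0)$; the symmetric argument gives $R$, and Lemma \ref{lem:deltaslim} applied to the finite triangle on $e,\gamma_-(R),\gamma_+(S)$ --- whose third side now has length at most $2N(5,0)$ rather than $2K$ --- yields the slimness claim. Your finite sub-triangle decomposition and case analysis are fine in shape, but without this quasigeodesic bootstrap the argument does not establish the lemma as stated.
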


\begin{figure}
  \centering
  \def\svgwidth{3 in}
  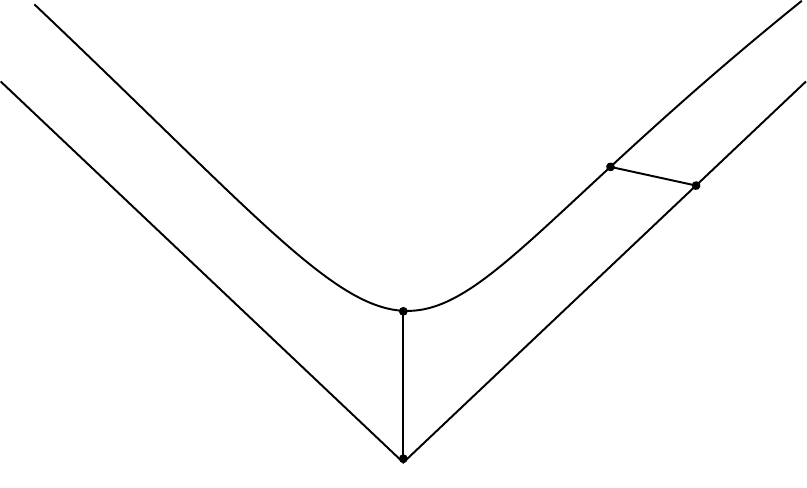
  \caption{Lemma \ref{lem:third edge close}}
\end{figure}

\begin{proof}
Up to reparameterization we may assume that $\gamma(0)$ is the point on $\gamma$ closest to $e$. By assumption we know that there is a constant $K>0$ such that $d_{\mathrm{Haus}}(\gamma([0, \infty)), \gamma_+)<K$. Let $t \in [6K, \infty)$ and let $t' \in [0, \infty)$ be such that $\gamma(t')$ is the closest point along $\gamma$ to $\gamma_+(t)$.  We claim that $\phi= [e, \gamma(0)] \cup \gamma([0,t']) \cup [\gamma(t'), \gamma_+(t)]$ is a $(5,0)$-quasigeodesic.

It suffices to check the standard inequality with vertices in different segments of $\phi$.  First suppose that $u \in [e, \gamma(0)]$ and $v \in [\gamma(0), \gamma(t')]$; the case that $u \in  [\gamma(t'), \gamma_+(t)]$ and $v \in [\gamma(0), \gamma(t)]$ is similar.

We know that $d(u, \gamma(0)) \leq d(u,v)$ because $\gamma(0)$ is a nearest point to $e$ along $\gamma$.  Note that $d(\gamma(0), v) \leq d(u,v) + d(u, \gamma(0))$ by the triangle inequality.  Let $d_\phi(u,v)$ denote the distance along $\phi$ between $u$ and $v$.  We have:
\begin{align*} 
\begin{split}
d(u,v) \leq d_\phi(u,v) =& d(u, \gamma(0)) + d(\gamma(0), v) \\
				\leq& d(u,v) + (d(u,v)+d(u, \gamma(0)) \\
				\leq& 3d(u,v).
\end{split}
\end{align*} 

Now assume that $u \in [e, \gamma(0)]$ and $v \in  [\gamma(t'), \gamma_+(t)]$. 

Let $\xi= [u, \gamma(0)]\cup \gamma([0,t']) \cup [\gamma(t'), v]$ and denote its arclength by $\|\xi\|$. Since $t \in [6K, \infty)$, we know that $d(u,v)\geq t-2K \geq \frac{2}{3}t $ and $d(\gamma(0), \gamma(t')) = t' <t+2K <2t$.
Putting these inequalities together:
\begin{align*}
d(u,v) \leq \|\xi\| =& d(u, \gamma(0)) + d(\gamma(0), \gamma(t')) + d(\gamma(s), v) \\
				<& t/6 + 2t + t/6 < 3t \leq  \frac{9}{2}d(u,v)				 
\end{align*}
Thus $\phi$ is a $(5,0)$-quasi geodesic.

By Lemma \ref{lem:hausdorff}, it follows that $d_{\mathrm{Haus}}(\phi, [e, \gamma_+(t))<N(5,0)$.  Hence there exist $S\geq 0$ and $t_0 > t' - K>0$ such that $d(\gamma(0), \gamma_+(S)), d(\gamma(t'), \gamma_+(t_0))<N(5,0)$.  Let $[\gamma_+(S), \gamma(0)], [\gamma(t'), \gamma_+(t_0)]$ be geodesics and $\phi' = [\gamma_+(S), \gamma(0)] \cup [\gamma(0,t')] \cup [\gamma(t'), \gamma_+(t_0)]$.  Then $\phi'$ is a $(1,2N(5,0))$-quasigeodesic and Lemma \ref{lem:hausdorff} implies that $d_{Haus}(\phi', [\gamma_+(S,t_0)])<N(1,2N(5,0))$ and hence 
$$d_{Haus}([\gamma(0,t')], \gamma_+(S,t_0))<2N(1,2N(5,0))+2N(5,0).$$

Since $t_0 > t' - K >0$ depends only on $t'$ and $N$, we have that $t_0 \rightarrow \infty$ as $t' \rightarrow \infty$, and hence $d_{Haus}(\gamma(0, \infty), \gamma_+(S, \infty))<2N(1,2N(5,0))+2N(5,0)$, as required.  A similar argument with $\gamma_-$ provides $R\geq 0$ such that $d_{Haus}(\gamma(0, -\infty), \gamma_-(R, \infty))<2N(1,2N(5,0))+2N(5,0)$.  

We know by construction that $d(\gamma_-(R), \gamma(0)), d(\gamma_+(S), \gamma(0)) < N(5,0)$, so $d(\gamma_-(R), \gamma_+(S))<2N(5,0)$. We also know by Lemma \ref{lem:deltaslim} that the triangle $\gamma_-([0, R]) \cup [\gamma_-(R), \gamma_+(S)] \cup \gamma_+([0, S])$ is $4N(3,0)$-slim.  Thus if we set $K_1= \max\{(4N(3,0)+2N(5,0)), (2N(1, 2N(5,0))+N(5,0))\}$, then the triangle $\gamma_- \cup \gamma \cup \gamma_+$ is $K_1$-slim.  This completes the proof.
\end{proof}


The following proposition will allow us to define the weak hull in the next subsection:

\begin{proposition} \label{prop:hull geodesics unif close}
There exists $K_2>0$ depending only on $N$ such that the following holds: Let $\lambda_-, \lambda_+ \in \partial X^{(N)}_e$ be distinct points.  If $\gamma, \gamma':(-\infty, \infty)\rightarrow X$ are geodesics with $\gamma(0), \gamma'(0)$ closest points to $e$ along $\gamma, \gamma'$, respectively, such that $\gamma, \gamma'$ are bi-asymptotic to $(\lambda_-, \lambda_+)$, then
$$d_{Haus}(\gamma, \gamma') < K_2.$$
\end{proposition}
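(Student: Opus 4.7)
The plan is to apply Lemma \ref{lem:third edge close} simultaneously to $\gamma$ and $\gamma'$ using a common pair of $N$-Morse geodesic rays from $e$, and then to bound the resulting ``apex parameters'' using the hyperbolicity of $X^{(N)}_e$.

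Fix $N$-Morse geodesic rays $\gamma_+, \gamma_-$ from $e$ to $\lambda_+, \lambda_-$, which exist by definition of $\partial X^{(N)}_e$. By the bi-asymptoticity hypotheses, Lemma \ref{lem:third edge close} applies to both $\gamma$ and $\gamma'$, producing a constant $K_1$ depending only on $N$ together with parameters $S, R \geq 0$ for $\gamma$ and $S', R' \geq 0$ for $\gamma'$ satisfying $d(\gamma_+(S), \gamma(0)), d(\gamma_-(R), \gamma(0)) < K_1$, the Hausdorff bounds $d_{\mathrm{Haus}}(\gamma_+([S, \infty)), \gamma([0, \infty))), d_{\mathrm{Haus}}(\gamma_-([R, \infty)), \gamma((-\infty, 0])) < K_1$, and their analogs for $\gamma'$. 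Once I show $|S - S'|$ and $|R - R'|$ are both bounded by a constant depending only on $N$, the triangle inequality applied to the Hausdorff-distance chain through $\gamma_+$ and $\gamma_-$ will give $d_{\mathrm{Haus}}(\gamma([0, \infty)), \gamma'([0, \infty))) \leq 2K_1 + |S - S'|$ and analogously for the negative halves, completing the proof.

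For the parameter bounds, I first observe that $d(\gamma_+(S), \gamma_-(R)) < 2K_1$ since both points are $K_1$-close to $\gamma(0)$. Combined with the elementary bound $(\gamma_+(S) \cdot \gamma_-(R))_e \leq \min(S, R)$, this immediately yields $|S - R| < 2K_1$, and similarly $|S' - R'| < 2K_1$. It thus suffices to bound $|(S + R) - (S' + R')|$. For this I use that $\gamma$ is a geodesic, so $d(\gamma(n), \gamma(-m)) = n + m$, together with the observation that $\gamma(n), \gamma(-m)$ are $K_1$-close to points $\gamma_+(S + t_n), \gamma_-(R + s_m) \in X^{(N)}_e$ with $|t_n - n|, |s_m - m| \leq 2K_1$; a direct calculation then gives
\[(\gamma_+(S + t_n) \cdot \gamma_-(R + s_m))_e = \tfrac{1}{2}(S + R) \pm O(K_1).\]
Since these two sequences converge to $\lambda_+, \lambda_-$ in the Gromov boundary of the $8N(3,0)$-hyperbolic space $X^{(N)}_e$ (Proposition \ref{prop:hypsubsets}), the standard hyperbolic estimate forces $\liminf (x_i \cdot y_j)_e$ to agree with $(\lambda_+ \cdot \lambda_-)_e$ up to a constant depending only on $N$. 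Hence $\tfrac{1}{2}(S + R)$ and $\tfrac{1}{2}(S' + R')$ both equal $(\lambda_+ \cdot \lambda_-)_e$ up to such a constant; subtracting and combining with the $|S - R|, |S' - R'|$ bounds yields the needed bound on $|S - S'|$ and $|R - R'|$.

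The main obstacle is this last Gromov-product step, which requires carefully relating ambient Gromov products along $\gamma$ (which is not assumed to lie in $X^{(N)}_e$) to intrinsic Gromov products in $X^{(N)}_e$, where hyperbolicity provides the approximate convergence of $\liminf (x_i \cdot y_j)_e$ to $(\lambda_+ \cdot \lambda_-)_e$. Every other step is essentially routine bookkeeping with the triangle inequality and Lemma \ref{lem:third edge close}.
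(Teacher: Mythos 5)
Your argument is correct, but it resolves the crux of the proposition by a genuinely different mechanism than the paper. Both proofs run through Lemma \ref{lem:third edge close}; the difficulty in either case is that the lemma hands you matching parameters ($S,R$ for $\gamma$ and $S',R'$ for $\gamma'$) with no a priori relation between them. You fix a common pair of $N$-Morse rays from $e$ and align the parameters by a Gromov-product computation, showing $\tfrac{1}{2}(S+R)$ and $\tfrac{1}{2}(S'+R')$ both approximate $(\lambda_+\cdot\lambda_-)_e$ to within a constant depending only on $N$, and combining this with $|S-R|,|S'-R'|\leq 2K_1$. The paper instead first reduces to comparing $\gamma'$ against a \emph{limit geodesic} $\gamma$ and applies Lemma \ref{lem:third edge close} using the two halves of $\gamma$ itself as the reference rays; then the two matching parameters are realized as points $\gamma(R),\gamma(S)$ on the single geodesic $\gamma$, each within $K_1'$ of $\gamma'(0)$, so $d(\gamma(R),\gamma(S))<2K_1'$ forces the parameters to agree up to $2K_1'$ with no Gromov products at all. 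The paper's trick is shorter and purely metric; yours avoids invoking the existence of a limit geodesic for the pair $(\lambda_-,\lambda_+)$ (an Arzel\`a--Ascoli construction) and gives the pleasant byproduct that the ``branch point'' of Lemma \ref{lem:third edge close} sits at parameter roughly $(\lambda_+\cdot\lambda_-)_e$. One reassurance on the step you flag as the main obstacle: there is nothing to reconcile between ambient and intrinsic Gromov products, because $X^{(N)}_e$ carries the metric restricted from $X$ (Proposition \ref{prop:hypsubsets} asserts hyperbolicity in the four-point sense of Definition \ref{defn:hyp} for exactly this reason), and all products in your computation are evaluated at points $\gamma_{\pm}(S+t_n)$ lying on the rays, hence in $X^{(N)}_e$; the ambient identity $d(\gamma(n),\gamma(-m))=n+m$ is used only to estimate distances between such points, which coincide in the two metrics. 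With that observation your liminf estimate $\liminf(x_n\cdot y_m)_e\geq(\lambda_+\cdot\lambda_-)_e-16N(3,0)$ applies verbatim and the proof closes.
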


\begin{proof} First assume $\gamma$ is a limit geodesic.

Since $\gamma$ is a limit geodesic, Lemma \ref{lem:deltaslim} implies that $\gamma$ is $N'$-Morse where $N'$ depends only on $N$.  By Lemma \ref{lem:third edge close}, we know that there exist $S,R \in [0, \infty)$ such that each of the following holds:

$$d_{\mathrm{Haus}}(\gamma([R, \infty)) , \gamma'((-\infty, 0]))<K_1,  d_{\mathrm{Haus}}(\gamma([S, \infty)) , \gamma'([0, \infty)))< K_1',$$ and
$$d(\gamma(R), \gamma'(0)), d(\gamma(S), \gamma'(0))< K_1',$$
where $K_1'$ depends only on $N'$.

We know then that $d(\gamma(R), \gamma(S))< 2K_1'$. Putting together these facts, we get that the Hausdorff distance between $\gamma, \gamma'$ is less than $2K_1'$.

It follows that any two geodesics $\gamma, \gamma'$ have Hausdorff distance bounded by $K_2=4K_1'$.
\end{proof}

The following is an immediate corollary of Lemma \ref{lem:third edge close} and Proposition \ref{prop:hull geodesics unif close}:

\begin{corollary}
There exists $K_1$ such that for any distinct $\lambda_+, \lambda_- \in \partial X^{(N)}_e$, any geodesic rays $\gamma_+, \gamma_-$ asymptotic to $\lambda_+, \lambda_-$ respectively with $\gamma_+(0) = \gamma_-(0) =e$, and any bi-infinite geodesic $\gamma$ bi-asymptotic to $\lambda_+, \lambda_-$, the triangle $\gamma_+ \cup \gamma \cup \gamma_-$ is $K_1$-slim.
\end{corollary}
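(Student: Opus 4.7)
The plan is to reduce to the setting of Lemma \ref{lem:third edge close} by first establishing slimness for a canonical limit triangle based at $e$, then transferring it to the given triangle via uniform Hausdorff bounds. I begin by picking sequences $(x_n), (y_n) \subset X^{(N)}_e$ converging to $\lambda_+, \lambda_-$ respectively, and extracting via Arzel\`a--Ascoli limit legs $\hat\gamma_+, \hat\gamma_-$ from $e$ (each $N$-Morse, as a limit of $N$-Morse geodesics), together with a limit geodesic $\gamma_0$ for the pair $(\lambda_-, \lambda_+)$ which is $N'$-Morse by Lemma \ref{lem:limit geodesics are Morse}. That same lemma yields $\gamma_0 \subset \mathcal{N}_{4N(3,0)}(\hat\gamma_+ \cup \hat\gamma_-)$, so the Hausdorff-closeness hypothesis of Lemma \ref{lem:third edge close} is satisfied for the triple $(\hat\gamma_+, \hat\gamma_-, \gamma_0)$; applying that lemma yields a slimness constant $\widetilde{K}$ for the limit triangle $\hat\gamma_+ \cup \gamma_0 \cup \hat\gamma_-$ depending only on $N$.

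Next, to pass from the limit triangle to the given one, I would invoke two uniform Hausdorff bounds. Proposition \ref{prop:hull geodesics unif close} gives $d_{Haus}(\gamma, \gamma_0) < K_2$, since both $\gamma$ and $\gamma_0$ are bi-asymptotic to the same pair $(\lambda_-, \lambda_+)$ with their $0$-parameters taken as closest points to $e$. Simultaneously, Lemma \ref{lem:stayclose} applied to the asymptotic rays $\gamma_\pm$ and the limit legs $\hat\gamma_\pm$ (all based at $e$ and asymptotic to the respective $\lambda_\pm$) produces $d_{Haus}(\gamma_\pm, \hat\gamma_\pm) < K_0$ and shows that $\gamma_\pm$ are $N'$-Morse, with $K_0$ and $K_2$ depending only on $N$.

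Finally, a routine diagram chase transferring slimness across corresponding sides of nearby triangles shows that $\gamma_+ \cup \gamma \cup \gamma_-$ is $\bigl(\widetilde{K} + 2\max\{K_0, K_2\}\bigr)$-slim, so taking $K_1 = \widetilde{K} + 2\max\{K_0, K_2\}$ completes the proof. I do not anticipate any substantive obstacle; all the geometric content is already contained in Lemma \ref{lem:third edge close} and Proposition \ref{prop:hull geodesics unif close}. The only care required is bookkeeping the basepoints and parameterizations of the bi-infinite geodesics so that the forward and backward halves match when combining bi-asymptoticity hypotheses with the Hausdorff bounds, in keeping with the ``$\gamma(0)$ is closest to $e$'' convention used throughout the section.
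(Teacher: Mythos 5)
Your argument is correct and uses exactly the ingredients the paper invokes: the paper states this corollary without proof as an immediate consequence of Lemma \ref{lem:third edge close} and Proposition \ref{prop:hull geodesics unif close}, and your combination of those two results with Lemma \ref{lem:stayclose} is a faithful expansion of that. The only simplification available is that Lemma \ref{lem:third edge close} can be applied directly to the triple $(\gamma_+, \gamma_-, \gamma)$ --- its slimness constant $K_1$ depends only on the Morse gauge and not on the Hausdorff constant $K$ in its hypothesis, and bi-asymptoticity together with Lemma \ref{lem:stayclose} supplies some finite $K$ --- so your detour through an auxiliary limit triangle is harmless but not strictly needed.
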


\subsection{The weak hull $\mathfrak H_e(G)$}\label{subsec:weak hull} 

We are ready to define the weak hull of $G$ in $X$.

\begin{definition}
The \emph{weak hull} of $G$ in $X$ based at $e \in X$, denoted $\mathfrak{H}_e(G)$, is the collection 
 of all biinfinite rays $\gamma$ which are bi-asymptotic to $(\lambda, \lambda')$ for some $\lambda \neq \lambda' \in \Lambda_e(G)$.
\end{definition}

Note that Proposition \ref{prop:hull geodesics unif close} says that any two rays $\gamma, \gamma' \in \mathfrak{H}_e(G)$ that are both bi-asymptotic to $(\lambda, \lambda')$ have bounded Hausdorff distance, where that bound depends on the stratum $X^{(N)}_e$ in which the sequences defining $\lambda, \lambda'$ live. Also note that the proof of Lemma \ref{lem:limit geodesics are Morse} implies that limit geodesics are in $\mathfrak{H}_e(G)$.

The following lemma is evident from the definitions:

\begin{lemma} \label{lem:G-invariance of weak hull}
If $|\Lambda_e(G)| \geq 2$, then $\mathfrak H_e(G)$ is nonempty and $G$-invariant.
\end{lemma}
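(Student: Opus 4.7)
The plan is to prove the two assertions in sequence: nonemptiness uses the limit-geodesic construction already developed in this section, and $G$-invariance reduces to tracking bi-asymptoticity under an isometry, modulo a small basepoint adjustment.

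For nonemptiness, I would start by picking distinct $\lambda_1, \lambda_2 \in \Lambda_e(G)$. Definition \ref{defn:limit set} supplies Morse gauges $N_1, N_2$ and orbit-point sequences $(g_k \cdot e), (h_k \cdot e)$ converging to $\lambda_1, \lambda_2$ in the respective strata. Taking $N = \max(N_1, N_2)$ places both sequences in $X^{(N)}_e$, at which point Lemma \ref{lem:limit geodesics are Morse} produces a bi-infinite Morse limit geodesic $\gamma$ along with two $N$-Morse limit legs based at $e$. The remark immediately following Proposition \ref{prop:limit geodesics are asymptotic} — which combines Proposition \ref{prop:limit triangles are thin} with Lemma \ref{lem:stayclose} — then guarantees that $\gamma$ is bi-asymptotic to $(\lambda_1, \lambda_2)$, so $\gamma \in \mathfrak H_e(G)$.

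For $G$-invariance, I would fix $\gamma \in \mathfrak H_e(G)$ bi-asymptotic to some $(\lambda_-, \lambda_+)$ and $g \in G$. Since $g$ acts by isometries, $g \cdot \gamma$ is a biinfinite geodesic; the preceding lemma in the paper gives $g \cdot \lambda_\pm \in \Lambda_e(G)$, and injectivity of the $G$-action on $\partial_M X_e$ (again from Lemma \ref{lem:G-action well-defined}) preserves their distinctness. The remaining task is to verify bi-asymptoticity of $g \cdot \gamma$ to $(g \cdot \lambda_-, g \cdot \lambda_+)$ using the definition's privileged basepoint $e$. I would do this by fixing any Morse ray $\alpha_+$ from $e$ asymptotic to $g \cdot \lambda_+$, noting via Lemma \ref{lem:G-action well-defined} that $g^{-1} \cdot \alpha_+$ is a Morse ray from $g^{-1} \cdot e$ asymptotic to $\lambda_+$, and then invoking Lemma \ref{lem:stayclose} (with a standard basepoint shift across the segment $[e, g^{-1} \cdot e]$) to compare $g^{-1} \cdot \alpha_+$ with a Morse ray from $e$ asymptotic to $\lambda_+$ up to uniformly bounded Hausdorff distance. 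Forward asymptoticity of $\gamma$ to $\lambda_+$ then transfers, after applying the isometry $g$, to bounded Hausdorff distance between $g \cdot \gamma$ and $\alpha_+$. The backward endpoint is symmetric.

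The main (and mild) obstacle I anticipate is exactly this basepoint-change step: bi-asymptoticity is defined against rays from $e$, but the natural invariance statement produces rays based at $g \cdot e$. Once one commits to pushing everything back to $e$ via Lemma \ref{lem:G-action well-defined} and Lemma \ref{lem:stayclose}, the argument becomes routine bookkeeping, which is why the authors describe the statement as evident from the definitions.
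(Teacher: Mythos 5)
Your proposal is correct and fills in exactly the argument the paper leaves implicit: the paper offers no written proof (it declares the lemma ``evident from the definitions''), but its surrounding remarks---that limit geodesics lie in $\mathfrak H_e(G)$ and are bi-asymptotic to their endpoints, and that $\Lambda_e(G)$ is $G$-invariant---are precisely the ingredients you assemble. Your handling of the basepoint shift from $g\cdot e$ back to $e$ in the $G$-invariance step is the right (and only nontrivial) piece of bookkeeping, and it goes through as you describe.
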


The following is an interesting question:

\begin{question}
If $|\Lambda(G)|\neq \emptyset$, then must we have in fact $|\Lambda(G)|\geq 2$?
\end{question}

In the case of $\mathrm{CAT}(0)$ spaces, this question has been affirmatively answered \cite[Lemma 4.9]{murray2015topology}.

\section{Boundary convex cocompactness and stability}

In this section, we prove the main theorem, Theorem \ref{thm:cc and stab}, namely that boundary convex cocompactness as in Definition \ref{defn:cc subspace} and stability as in Definition \ref{def:stability} are equivalent.

\subsection{Compact of limit sets have stable weak hulls}

For the rest of this section, fix a group $G$ acting by isometries on a proper geodesic metric space $X$.

In this subsection, we will prove that if $G$ has a compact limit set, then $\mathfrak H_e(G)$ is stable.

\begin{lemma}\label{lem:compact uniform gauge}
If $K \subset \partial_M X$ is nonempty and compact, then for any $e\in X$ there exists $N>0$ such that $K \subset \partial X^{(N)}_e$.
\end{lemma}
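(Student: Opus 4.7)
The plan is to argue by contradiction: assuming that no single Morse gauge $N$ satisfies $K \subseteq \partial X^{(N)}_e$, I would exhibit a sequence in $K$ with no convergent subsequence, contradicting the compactness of $K$.

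Starting from any $x_1 \in K$ with Morse gauge $N_1$, I would inductively construct a sequence $(x_n) \subseteq K$ together with a pointwise-nondecreasing chain of Morse gauges $N_1 \leq N_2 \leq \cdots$ such that $x_n \in \partial X^{(N_n)}_e \setminus \partial X^{(N_{n-1})}_e$. At stage $n$, the hypothesis $K \not\subseteq \partial X^{(N_{n-1})}_e$ furnishes some $x_n \in K \setminus \partial X^{(N_{n-1})}_e$; letting $M_n$ be a Morse gauge witnessing $x_n \in \partial_M X_e$, I set $N_n := \max(N_{n-1}, M_n)$ pointwise, which remains a Morse gauge since any $N_{n-1}$- or $M_n$-Morse geodesic is automatically $N_n$-Morse.

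The key claim is then that $S = \{x_n\}$ is a closed infinite discrete subspace of $\partial_M X_e$, contradicting $S \subseteq K$ compact. By the definition of the direct-limit topology on $\partial_M X_e = \varinjlim_{\mathcal M} \partial X^{(N)}_e$, closedness of $S$ reduces to showing that $S \cap \partial X^{(M)}_e$ is finite in $\partial X^{(M)}_e$ for every Morse gauge $M$: indeed, once $N_{n-1} \geq M$ pointwise, the containment $\partial X^{(M)}_e \subseteq \partial X^{(N_{n-1})}_e$ forces $x_n \notin \partial X^{(M)}_e$ by construction.

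The main obstacle is ensuring that the chain $(N_n)$ is cofinal enough to eventually dominate an arbitrary prescribed $M$, since the pointwise partial order on Morse gauges is not countably directed a priori. I would address this by interleaving into the induction a countable family of candidate gauges---drawn, for instance, from representative rays of a countable dense subset of each stratum (which is second countable with respect to its visual metric $d_{(N)}$)---and enlarging $N_n$ at step $n$ to also dominate the first $n$ members of this family. With this refinement, any Morse gauge $M$ arising from a subsequence of $(x_n)$ is eventually dominated, so only finitely many $x_n$ can lie in $\partial X^{(M)}_e$, yielding the required closed discrete subspace and completing the contradiction.
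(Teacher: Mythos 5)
Your overall strategy is the same as the paper's (which follows Murray's Lemma 3.3): assuming no single gauge works, extract a sequence escaping the strata, show it is an infinite closed discrete subset of $K$, and contradict compactness. Your reduction of closedness in the direct limit topology to finiteness of $S\cap\partial X^{(M)}_e$ for \emph{every} gauge $M$ is correct, and you have put your finger on exactly the point where the content lies: the paper's proof simply asserts that ``for all $n$ and $N$, $A_n\cap\partial_M^N X$ is finite,'' and, as you observe, for a gauge $M$ not dominated by any $N_{n-1}$ in your chain, nothing in the construction prevents infinitely many $x_n$ from landing in $\partial X^{(M)}_e$.

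The problem is that your proposed repair does not close this gap. There is no countable family of gauges to interleave: the strata are indexed by an uncountable set, and the poset of Morse gauges under pointwise domination has no countable cofinal subfamily (a diagonal argument against pointwise domination of functions $\R_{\geq 1}\times\R_{\geq 0}\to\R_{\geq 0}$ rules one out), so ``enlarging $N_n$ to dominate the first $n$ members'' of any pre-chosen countable family cannot reach an arbitrary $M$ chosen after the fact. Moreover the dangerous gauges are not ones ``arising from a subsequence of $(x_n)$'' in any enumerable sense: each $x_n$ lies in uncountably many strata, and the worry is precisely a gauge $M$ incomparable to your whole chain that happens to witness infinitely many of the $x_n$. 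A repair that does work uses the near-uniqueness of rays to a given boundary point. For each $x\in K$ fix a geodesic ray $\gamma_x$ from $e$ to $x$ and let $N_x(\lambda,\epsilon)$ be its optimal Morse constant at $(\lambda,\epsilon)$. If $\sup_{x\in K}N_x(\lambda,\epsilon)<\infty$ for every $(\lambda,\epsilon)$, the pointwise supremum is a gauge containing $K$ and we are done; otherwise there is a fixed argument $(\lambda_0,\epsilon_0)$ with $\sup_{x\in K}N_x(\lambda_0,\epsilon_0)=\infty$, and one chooses $x_i$ with $N_{x_i}(\lambda_0,\epsilon_0)>i$. Now if $x_i\in\partial X^{(M)}_e$, then some ray from $e$ to $x_i$ is $M$-Morse, so by Lemma \ref{lem:stayclose} the chosen ray $\gamma_{x_i}$ is $M''$-Morse for a gauge $M''$ depending only on $M$; hence $i<N_{x_i}(\lambda_0,\epsilon_0)\leq M''(\lambda_0,\epsilon_0)$, which bounds the number of such $i$ uniformly in terms of $M$ alone. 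This gives the finiteness of $S\cap\partial X^{(M)}_e$ for every $M$, after which your closed-discrete argument goes through verbatim.
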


\begin{proof}
We will closely follow the proof of Lemma 3.3 in \cite{murray2015topology}.

Since X is proper, we use the fact that $\partial X^{(N)}_e$ is homeomorphic to $\partial_M^N X$ \cite[Theorem 3.14]{cordes2016stability}.

Assume that $K$ is not contained in $\partial_M^N X$ for any Morse gauge $N$. Then we know that there is a sequence  $(\alpha_i) \subset K$ of $N_i$-Morse geodesics where $N_i > N_{i-1} +1$ for all $i$.  Let $A_n=\{\alpha_i\}_{i \geq n+1}$. We note that for all $n$ and $N$, $A_n \cap \partial_M^N X$ is finite. Since singletons are closed in each $\partial_M^N X$, each of which is Hausdorff, it follows that singletons are closed in $\partial_M X$. Thus $A_n$ is closed in $\partial_M X$ by the definition of the direct limit topology.

The collection $\{\partial_M X \char`\\ A_n\}_{n \in \N}$ is an open cover of $K$, but each $\partial_M X \setminus A_n$ only contains a finite number of the $\alpha_i$, so that any finite subcollection of $\{\partial_M X \char`\\ A_n\}_{n \in \N}$ will only contain finitely many $\alpha_i$.  This contradicts the fact that $K$ is compact, completing the proof.
\end{proof}

The following proposition is the main technical statement of this section:

\begin{figure}
  \centering
  \def\svgwidth{3 in}
  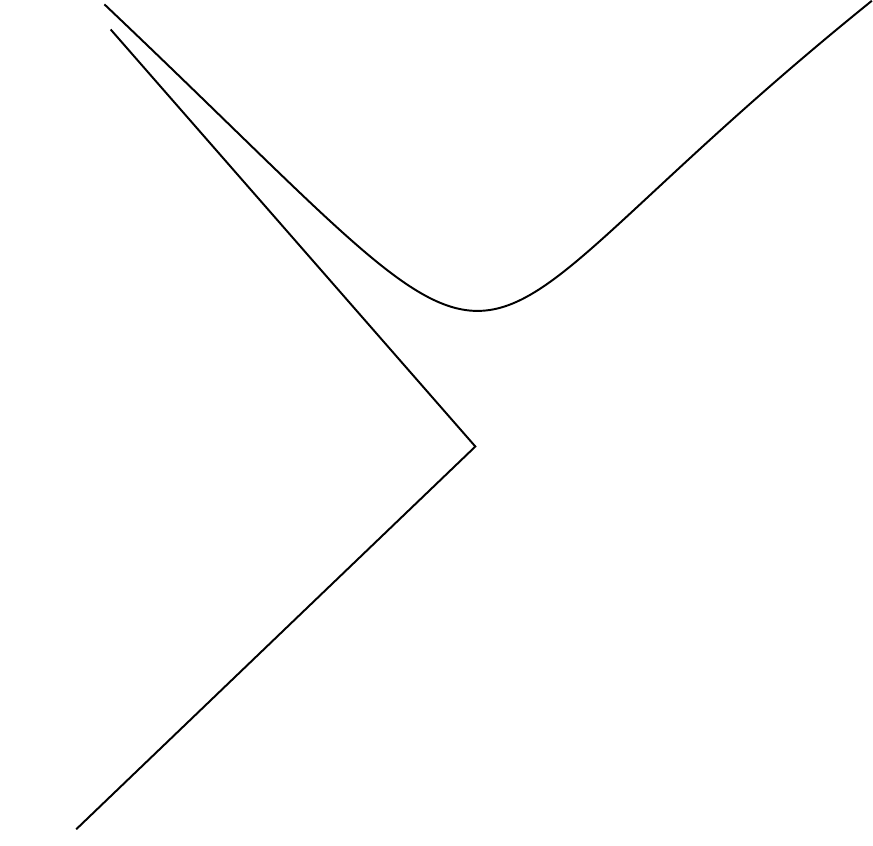
  \caption{Proposition \ref{prop:compact hyp hull}}
\end{figure}

\begin{proposition}[Compact limit sets have stable hulls] \label{prop:compact hyp hull}
If $\Lambda_e(G) \subset \partial X^{(N)}_e$ is compact for some (any) $e \in X$, then for each $e \in X$ there exists a Morse gauge $N'$ such that $\mathfrak H_e(G)$ is $N'$-stable.
\end{proposition}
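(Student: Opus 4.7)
The plan is to show that for every $x \in \mathfrak H_e(G)$ the geodesic $[e,x]$ is Morse with gauge depending only on $N$, and then invoke Lemma \ref{lem:deltaslim} on the triangle at $e, x, y$ to produce a uniform Morse gauge on $[x,y]$ for any $y \in \mathfrak H_e(G)$. First, applying Lemma \ref{lem:compact uniform gauge} to the compact set $\Lambda_e(G)$ produces a Morse gauge $N$ with $\Lambda_e(G) \subset \partial X^{(N)}_e$.

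For the main step, fix $x \in \mathfrak H_e(G)$ on a biinfinite geodesic $\sigma_x$ bi-asymptotic to $\lambda_\pm \in \Lambda_e(G)$, and choose $N$-Morse rays $\gamma_\pm$ from $e$ to $\lambda_\pm$. By Proposition \ref{prop:limit triangles are thin} and Lemma \ref{lem:stayclose}, limit geodesics are uniformly bi-asymptotic to their endpoints; Proposition \ref{prop:hull geodesics unif close} then transfers this uniform bi-asymptoticity to $\sigma_x$. Hence Lemma \ref{lem:third edge close} applies with uniform constants and produces $K_1 = K_1(N)$ such that the triangle $\gamma_- \cup \sigma_x \cup \gamma_+$ is $K_1$-slim; in particular there exists $x' \in \gamma_- \cup \gamma_+$ with $d(x, x') \leq K_1$, and $[e, x']$ is $N$-Morse. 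To upgrade this to a Morse bound for $[e, x]$ itself, given any $(K,C)$-quasigeodesic $q$ from $e$ to $x$, I would extend $q$ by the short geodesic $[x, x']$ to obtain a $(K, C + 2K_1)$-quasigeodesic from $e$ to $x'$, which by the Morse-ness of $[e, x']$ stays in an $N(K, C + 2K_1)$-neighborhood of $[e, x']$; Lemma \ref{lem:hausdorff} applied to the $(1, 2K_1)$-quasigeodesic $[e, x] \cup [x, x']$ then gives a uniform Hausdorff bound between $[e, x']$ and $[e, x]$, producing a gauge $N_0 = N_0(N)$ for which $[e, x]$ is $N_0$-Morse.

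With $[e, x]$ and $[e, y]$ both $N_0$-Morse for every $x, y \in \mathfrak H_e(G)$, Lemma \ref{lem:deltaslim} produces a uniform $N'$ such that the geodesic $[x, y]$ is $N'$-Morse, verifying Definition \ref{def:stability} for the isometric inclusion $\mathfrak H_e(G) \hookrightarrow X$. The main obstacle is the uniformity of the slim-triangle constant $K_1$: it relies on routing through a limit geodesic via Proposition \ref{prop:hull geodesics unif close} to ensure that the bi-asymptoticity constants for an arbitrary $\sigma_x \in \mathfrak H_e(G)$ depend only on $N$ and not on $\sigma_x$ itself.
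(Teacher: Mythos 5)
Your argument is correct, but it takes a genuinely different route from the paper's. The paper fixes $x,y$ on limit geodesics, uses slimness of limit triangles (Proposition \ref{prop:limit triangles are thin}) twice to find $x'',y''$ within $8N(3,0)$ of $x,y$ on a single auxiliary limit geodesic $\gamma'$ joining a limit point of $x$'s geodesic to one of $y$'s, and then observes that $[x'',x]\cup[x,y]\cup[y,y'']$ is a $(1,16N(3,0))$-quasigeodesic with endpoints on the Morse geodesic $\gamma'$. You instead prove the ``radial'' statement that every geodesic $[e,x]$ with $x\in\mathfrak H_e(G)$ is uniformly Morse --- via Lemma \ref{lem:compact uniform gauge}, the $K_1(N)$-slimness from Lemma \ref{lem:third edge close}, and an endpoint-perturbation argument --- and then close the triangle at the basepoint with Lemma \ref{lem:deltaslim}. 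Your route buys a reusable intermediate fact (the hull is uniformly Morse-starlike from $e$) and, since Lemma \ref{lem:third edge close} applies to any bi-asymptotic geodesic with its $K_1$ depending only on $N$, you do not actually need to route through Proposition \ref{prop:hull geodesics unif close} at all; the paper's route stays entirely inside the already-developed limit-triangle machinery and never needs Morseness of the radial segments. Two small points to tighten: your upgrade step as written only controls quasigeodesics running from $e$ to $x$, whereas the Morse property quantifies over quasigeodesics with both endpoints anywhere on $[e,x]$ --- the fix is the same perturbation applied at both endpoints, and the paper elides the identical step (``an easy argument then implies...''); and you should note that $[e,x']$ is an initial subsegment of the $N$-Morse ray $\gamma_\pm$, hence Morse with gauge depending only on $N$, which is standard but worth a sentence.
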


\begin{proof}
Let $x,y \in \mathfrak H_e(G)$.  By Lemma \ref{lem:limit geodesics are Morse}, we may assume that $x,y$ do not lie on the same geodesic.  Let $[x,y]$ be any geodesic between $x$ and $y$.

Since $\mathfrak H_e(G)$ is a subspace with the induced metric, it suffices to prove that $[x,y]$ is uniformly Morse.  Moreover, since every hull geodesic lies within uniform Hausdorff distance of a limit geodesic by Proposition \ref{prop:hull geodesics unif close} and Lemma \ref{lem:compact uniform gauge}, it suffices to consider the case when $x$ and $y$ lie on limit geodesics by Lemma \ref{lem:hausdorff}.


Let $\gamma_x, \gamma_y$ be distinct limit geodesics on which $x,y$ lie, respectively.  By Proposition \ref{prop:limit triangles are thin}, there exist limit legs $\gamma'_x, \gamma'_y$ based at $e$ and points $x' \in \gamma'_x, y' \in \gamma'_y$ such that $d(x,x'), d(y,y')\leq 4N(3,0)$.

Let $\gamma'$ be the limit geodesic which forms a limit triangle with limit legs $\gamma'_x$ and $\gamma'_y$. Note by definition $\gamma' \in  \mathfrak H_e(G)$. By Proposition \ref{prop:limit triangles are thin}, the limit triangle $\gamma' \cup \gamma'_x \cup \gamma'_y$ is $4N(3,0)$-thin, so there exist $x'', y'' \in \gamma'$ with $d(x',x''), d(y',y'') < 4N(3,0)$, and hence $d(x,x''), d(y,y'') < 8N(3,0)$.

Let $[x'',x], [y,y'']$ be any geodesics between $x,x''$ and $y,y''$, respectively.  Then the concatenation $\sigma= [x'',x] \cup [x,y] \cup [y,y'']$ gives a $(1, 16N(3,0) )$-quasigeodesic with endpoints $x'',y''$ on the $N'$-Morse geodesic $\gamma'$.  If $[x'',y''] \subset \gamma'$ is the subsegment of $\gamma'$ between $x''$ and  $y''$, then $\sigma$ is in the $N'(1, 16N(3,0))$ neighborhood of $[x'',y'']$.  An easy argument then implies that $\sigma$ is uniformly Morse, from which it follows immediately that $[x,y]$ is uniformly Morse. \end{proof}

\subsection{Boundary convex cocompactness implies stability}

We now prove the first direction of the main theorem:

\begin{theorem} \label{thm:cocompact implies stability}
Suppose $G$ acts by isometries on a proper geodesic metric space $X$ such that 
\begin{enumerate}
\item The action of $G$ on $X$ is proper,
\item $\Lambda_e(G)$ is nonempty and compact for any $e \in X$, and
\item $G$ acts cocompactly on $\mathfrak H_e(G)$ for any $e \in X$.
\end{enumerate}
Then any orbit of $G$ is a stable subspace of $X$ and the orbit map extends continuously and $G$-equivariantly to an embedding of $\partial_{Gr} G$ into $\partial_M X_e$ which is a homeomorphism onto its image $\Lambda_e(G)$.
\end{theorem}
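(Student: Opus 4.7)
My plan is to first upgrade compactness of $\Lambda_e(G)$ to stability of $\mathfrak H_e(G)$ via Proposition \ref{prop:compact hyp hull}, then run a Svarc-Milnor-type argument on the cocompact action $G \curvearrowright \mathfrak H_e(G)$ to push stability onto an orbit, and finally read off the boundary statement from Corollary \ref{cor:stable morse preserving}.

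In detail: Lemma \ref{lem:compact uniform gauge} produces a Morse gauge $N$ with $\Lambda_e(G) \subset \partial X^{(N)}_e$, and Proposition \ref{prop:compact hyp hull} then yields $N'$ making $\mathfrak H_e(G)$ an $N'$-stable subspace of $X$, so any two points in the hull are joined by an $N'$-Morse geodesic in $X$. Assuming $|\Lambda_e(G)| \geq 2$ (the one-point case is handled separately by noting that a $G$-cocompact union of limit geodesics forces at least two accumulation points), I fix $e_0 \in \mathfrak H_e(G)$; Lemma \ref{lem:G-invariance of weak hull} places $G \cdot e_0 \subset \mathfrak H_e(G)$, so all geodesics of $X$ between orbit points are uniformly $N'$-Morse. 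By cocompactness, $G \cdot e_0$ is $D$-dense in $\mathfrak H_e(G)$ for some $D$, and by properness the set $S = \{g : d(e_0, g e_0) \leq 2D + L\}$ is finite for a suitable $L$. A Svarc-Milnor walk along the $N'$-Morse geodesic $\gamma$ from $e_0$ to $g e_0$, marking orbit points near tracked points of $\gamma$ at regular increments, expresses each $g$ as a bounded-length word in $S$; combined with uniform Morse-ness this yields a stable orbit map $G \to X$, so $G$ is hyperbolic. Stability transfers to any other orbit $G \cdot e'$ because $d_{\mathrm{Haus}}(G \cdot e_0, G \cdot e') \leq d(e_0, e')$.

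With a stable orbit map in hand, Corollary \ref{cor:stable morse preserving} supplies a $G$-equivariant homeomorphism of $\partial_M G = \partial_{Gr} G$ onto its image in $\partial_M X_e$. I finish by checking that the image is $\Lambda_e(G)$: a Gromov-convergent $(g_n) \subset G$ induces a Morse-convergent $(g_n \cdot e)$, which is by definition a point of $\Lambda_e(G)$; conversely, a limit point $\lambda = \lim g_n \cdot e$ forces $(g_n)$ to Gromov-converge in $G$ by Lemma \ref{lem:qi preserves product} applied to the QI embedding obtained above, and the boundary extension sends the resulting point of $\partial_{Gr} G$ to $\lambda$.

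The main obstacle is the Svarc-Milnor walk itself. Classically it requires a proper geodesic space, whereas $\mathfrak H_e(G)$ is only quasi-geodesic inside $X$; worse, the $N'$-Morse geodesic from $e_0$ to $g e_0$ need not stay in the hull, so cocompactness on $\mathfrak H_e(G)$ does not directly produce nearby orbit points along $\gamma$. I expect to resolve this using the slim-triangle machinery of Lemma \ref{lem:deltaslim} together with the limit-triangle structure and hull-geodesic uniqueness of Proposition \ref{prop:hull geodesics unif close}, which together should force $\gamma$ to lie in a bounded Hausdorff neighborhood of a concatenation of limit legs and limit geodesics inside $\mathfrak H_e(G)$, where cocompactness does apply.
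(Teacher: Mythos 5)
Your proposal follows essentially the same route as the paper's proof: Proposition \ref{prop:compact hyp hull} makes $\mathfrak H_e(G)$ stable, the proper cocompact action on the hull gives (via \v{S}varc--Milnor) that the orbit map is a quasiisometry onto the hull so that geodesics between orbit points are uniformly Morse, and Corollary \ref{cor:stable morse preserving} yields the boundary statement. The technical points you flag (that the hull is only quasi-geodesic, and that the image of the boundary embedding must be identified with $\Lambda_e(G)$) are real but are exactly the details the paper elides in the same argument, and your proposed resolutions are sound.
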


\begin{proof}

Since $G$ acts properly and cocompactly on $\mathfrak H_e(G)$, it follows that the orbit map $g \mapsto g \cdot e$ is a quasiisometry $G \rightarrow \mathfrak H_e(G)$, where we consider $\mathfrak H_e(G)$ with the metric induced from $X$.  In particular, $G \cdot e$ is quasiisometrically embedded in $X$.

Let $x \in \mathfrak H_e(G)$ and consider $G \cdot x \subset \mathfrak H_e(G) \subset X$.  Let $y, z \in G \cdot x$.  By Proposition \ref{prop:compact hyp hull}, there exists a Morse gauge $N$ such that any geodesic $[y,z]$ between $y,z$ is $N$-Morse.  Hence, any $(K,C)$-quasigeodesic between $y,z$ must stay within $N(K,C)$ of $[y,z]$, implying that $G\cdot x$ is a stable subspace of $X$.  In particular, $G$ is hyperbolic, so $\partial_{Gr} G$ is defined.

By Corollary \ref{cor:stable morse preserving}, the orbit map $g \mapsto g \cdot e$ induces a topological embedding $f: \partial_{Gr}G \rightarrow \partial_M X$ which gives a homeomorphism $f:\partial_{Gr} G \rightarrow \Lambda_e(G)$.  This completes the proof. \end{proof}

\subsection{Stability implies boundary convex cocompactness}


Finally, we prove the second direction:

\begin{theorem} \label{thm:stability implies cocompact}
Suppose $G$ acts by isometries on a proper geodesic metric space $X$ such that any orbit map of $G$ is an infinite diameter stable subspace of $X$.  Then:
\begin{enumerate}
\item \label{limit set compact} $\Lambda(G)$ is nonempty and compact,
\item $\mathfrak H_e(G)$ is a stable subset of $X$,
\item $G$ acts cocompactly on $\mathfrak H_e(G)$, and
\item Any orbit map extends continuously and $G$-equivariantly to an embedding of $\partial_{Gr} G$ into $\partial_M X_e$ which is a homeomorphism onto its image $\Lambda_e(G)$.
\end{enumerate}
\end{theorem}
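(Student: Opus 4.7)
The plan is to bootstrap everything from the hypothesis that the orbit map $\mathrm{orb}_e\colon G \to X$ is a stable quasi-isometric embedding, using Corollary \ref{cor:stable morse preserving} and the structural results of Section \ref{sec:G action}. First, since stability of a subspace implies hyperbolicity (noted after Definition \ref{def:stability}), $G$ is a hyperbolic group, so $\partial_{Gr}G$ is defined and compact. Properness of the $G$-action on $X$ follows from $\mathrm{orb}_e$ being a quasi-isometric embedding: for compact $K \subset B_R(e)$, any $g$ with $gK \cap K \neq \emptyset$ satisfies $d(e,g\cdot e) \leq 2R$, so the QI-embedding constants force $d_G(1_G,g)$ to be uniformly bounded, leaving only finitely many such $g$. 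Corollary \ref{cor:stable morse preserving} then yields a $G$-equivariant topological embedding $f\colon \partial_{Gr}G \hookrightarrow \partial_M X_e$ which is a homeomorphism onto its image.

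Next I would identify $f(\partial_{Gr}G) = \Lambda_e(G)$, which simultaneously yields (1) and (4). The inclusion $f(\partial_{Gr}G) \subset \Lambda_e(G)$ is immediate: a geodesic ray $(g_n)$ in $G$ with $g_n \to \xi \in \partial_{Gr}G$ maps to a uniformly $N$-Morse quasigeodesic ray $(g_n \cdot e)$ by stability, and by Lemma \ref{lem:hausdorff} plus Arzel\`a-Ascoli this represents $f(\xi) \in \Lambda_e(G)$. Conversely, given $\lambda \in \Lambda_e(G)$ realized by $(g_k \cdot e) \subset X^{(N)}_e$, the Gromov products $((g_i\cdot e)\cdot(g_j\cdot e))_e \to \infty$ in $X$; Lemma \ref{lem:qi preserves product} transports this to $(g_i \cdot g_j)_e \to \infty$ in the hyperbolic group $G$, so $(g_k)$ limits to some $\xi \in \partial_{Gr}G$ with $f(\xi) = \lambda$. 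Since $\partial_{Gr}G$ is compact, so is $\Lambda_e(G)$, proving (1); Proposition \ref{prop:compact hyp hull} then immediately gives (2).

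For (3), cocompactness, the strategy is to produce a uniform $R$ with $\mathfrak H_e(G) \subset \mathcal N_R(G\cdot e)$; then $G$-invariance of $\mathfrak H_e(G)$ (Lemma \ref{lem:G-invariance of weak hull}) lets us translate any $x \in \mathfrak H_e(G)$ into $\mathfrak H_e(G) \cap \overline{B_R(e)}$, which is compact because $X$ is proper. To bound such an $x$, I assume it lies on a bi-infinite geodesic $\gamma$ bi-asymptotic to $\lambda^\pm \in \Lambda_e(G)$; lift to $\xi^\pm \in \partial_{Gr}G$ via $f^{-1}$ and choose a bi-infinite geodesic $\alpha$ in $G$ between $\xi^\pm$. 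The orbit set $\alpha\cdot e$ is a uniformly $N$-Morse quasigeodesic by stability, and connecting the points $\alpha(\pm n)\cdot e$ by geodesics (each uniformly $N$-Morse) and passing to an Arzel\`a-Ascoli limit produces a bi-infinite geodesic $\eta \subset X$ lying in a uniform Hausdorff neighborhood of $\alpha\cdot e \subset G \cdot e$ by Lemma \ref{lem:hausdorff}. Since $\Lambda_e(G)$ is contained in a single stratum $\partial X^{(N'')}_e$ by Lemma \ref{lem:compact uniform gauge}, Proposition \ref{prop:hull geodesics unif close} forces $d_{\mathrm{Haus}}(\gamma,\eta) \leq K_2$ once $\eta$ is shown bi-asymptotic to $\lambda^\pm$, so $x$ is within uniform distance of $G \cdot e$, as required.

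The principal obstacle is the last clause: verifying that the Arzel\`a-Ascoli limit $\eta$ is genuinely bi-asymptotic to $\lambda^\pm$ in the sense required by Proposition \ref{prop:hull geodesics unif close}. This should reduce to showing that the half-ray $(\alpha(n)\cdot e)_{n\geq 0}$ converges to $f(\xi^+)$ in $\partial_M X_e$ (and symmetrically for $n \to -\infty$), which in turn follows from unpacking the construction of $f$ in Corollary \ref{cor:stable morse preserving} together with Lemma \ref{lem:limit rays are asymptotic} to align the corresponding limit geodesic ray with the forward half of $\eta$; once this is pinned down, the remaining pieces are direct applications of the machinery of Section \ref{sec:G action}.
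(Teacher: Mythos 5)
Your proposal is correct, and for items (1), (2), and (4) it matches the paper exactly: both arguments get hyperbolicity of $G$ from stability and then invoke Corollary \ref{cor:stable morse preserving} to identify $\partial_{Gr}G$ with $\Lambda_e(G)$ (your extra verification that the image really is all of $\Lambda_e(G)$, via Lemma \ref{lem:qi preserves product}, is detail the paper suppresses), after which Proposition \ref{prop:compact hyp hull} gives stability of the hull. Where you genuinely diverge is the cocompactness argument. The paper never lifts $\lambda^{\pm}$ back to $\partial_{Gr}G$: starting from $z$ on a limit geodesic $\gamma$ with defining orbit sequences $(g_n\cdot e)$, $(h_n\cdot e)$, it pushes $z$ onto a limit leg using slimness of limit triangles (Proposition \ref{prop:limit triangles are thin}), then onto a finite geodesic $[e,g_n\cdot e]$ using uniform convergence on compact sets, and finally onto the orbit quasigeodesic $q_n$ using the Morse property of $[e,g_n\cdot e]$ -- a chain of three bounded jumps, each using only already-established facts. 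You instead build a second bi-infinite geodesic $\eta$ out of a geodesic of $G$ through $\xi^{\pm}=f^{-1}(\lambda^{\pm})$ and compare it to $\gamma$ via Proposition \ref{prop:hull geodesics unif close}; this is conceptually appealing (it exhibits the weak hull directly as a neighborhood of the image of $G$'s geodesics) but forces you to verify the bi-asymptoticity of $\eta$, which is exactly the obstacle you flag. That gap is closable: your $\eta$ is, up to bounded Hausdorff distance, a limit geodesic for the sequences $(\alpha(\pm n)\cdot e)$, so Lemma \ref{lem:limit geodesics are Morse}, Proposition \ref{prop:limit triangles are thin}, and the fact that the induced boundary map sends $\lim\alpha(n)$ to $\lim(\alpha(n)\cdot e)$ together supply the required asymptoticity. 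The paper's route simply avoids ever having to prove this, at the cost of being tied to the particular defining sequences of the limit points; yours costs one extra (fillable) lemma but is independent of the choice of representing sequences.
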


\begin{proof}
$G$ is hyperbolic because its orbit is stable, so we know that $\partial_M G = \partial_{Gr} G$ is compact.  By Corollary \ref{cor:stable morse preserving}, any orbit map $g \mapsto g \cdot e$ induces a homeomorphism $\partial_{Gr} G \rightarrow \Lambda_e(G)$, giving us (1) and (4).

Thus by Proposition \ref{prop:compact hyp hull}, the weak hull $\mathfrak H_e(G)$ is stable in $X$, proving (3).  It remains to prove that $G$ acts cocompactly on $\mathfrak H_e(G)$.

To see this, let $z \in \mathfrak H_e(G)$.  By Proposition \ref{prop:hull geodesics unif close}, we may assume that $z \in \gamma$ for some limit geodesic $\gamma$ with ends $(g_n \cdot e), (h_n \cdot e)$.  Let $\gamma_g, \gamma_h$ be the limit legs of the corresponding limit triangle for $\gamma$.  By Lemma \ref{prop:limit triangles are thin}, there exists $x \in \gamma_g \cup \gamma_h$ such that $d(x,z) < 4N(3,0)$, where $N$ is the Morse gauge such that $\Lambda_e(G) \subset \partial X^{(N)}_e$.

Without loss of generality, assume that $x \in \gamma_g$.  By definition of $\gamma_g$, there exists a sequence of $N$-Morse geodesics $\gamma_n = [e, g_n \cdot e]$ which subsequentially converges to $\gamma_g$ uniformly on compact sets.  Hence, by passing to a subsequence if necessary, for any $\epsilon>0$ and every $T>0$, there exists $M>0$ such that for any $n\geq M$, we have
$$d_{\text{Haus}}(\gamma_n([0,T]), \gamma_g([0,T]))<\epsilon.$$

Taking $T>0$ sufficiently large so that $x \in \gamma_g([0,T])$ and taking $n>M$ for $M$ corresponding to this $T$, we may assume there exists $y \in \gamma_n$ such that $d(y,x) < \epsilon$.

Since $G\cdot e$ is quasiisometrically embedded, there exist $K,C$ such that any geodesic in $G$ between $1$ and $g_n$ maps to a $(K,C)$-quasigeodesic $q_n$ between $e$ and $g_n \cdot e$.  Let $q'_n$ denote the $(K,C +(K+C))$-quasigeodesic obtained by connecting successive vertices of $q_n$ by geodesics of length at most $K+C$.  Since $[e,g_n \cdot e]$ is $N$-Morse, it follows that 


there exists $w' \in q'_n$ such that $d(w',y) < 2N(K,2C+K)$, and thus $w \in q_n$ with $d(w,y)<2N(K,2C+K) + \frac{K+C}{2}$.  It follows that
$$d(z,w) \leq d(z,x) + d(x, y) + d(y,w) < 4N(3,0) + \epsilon + \left(2N(K,2C+K)+\frac{K+C}{2}\right)$$
which is a constant depending only on $N$ and the quasiisometric embedding constants of $G$ in $X$.  Hence $G$ acts cocompactly on $\mathfrak H_e(G)$, as required.
\end{proof}

\bibliographystyle{alpha}
\bibliography{boundarycc}
\end{document}